\theoremstyle{plain}
\newtheorem{theorem}{Theorem}[section]
\newtheorem{lemma}[theorem]{Lemma}
\newtheorem{proposition}[theorem]{Proposition}
\theoremstyle{definition}
\newtheorem{definition}[theorem]{Definition}
\newtheorem{example}[theorem]{Example}
\theoremstyle{remark}
\newtheorem{remark}{Remark}
\begin{document}

\articletype{ARTICLE TEMPLATE}

\title{A neurodynamic approach for a class of pseudoconvex semivectorial bilevel optimization problems}

\author{
\name{Tran Ngoc Thang\textsuperscript{a}\thanks{CONTACT T.N.Thang. Email: thang.tranngoc@hust.edu.vn} and Dao Minh Hoang\textsuperscript{b} and Nguyen Viet Dung \textsuperscript{a}}
\affil{\textsuperscript{a}School of Applied Mathematics and Informatics, Hanoi University of Science and Technology,\\ Hanoi, Vietnam } 
\affil{\textsuperscript{b} ISIMA, Clermont Auvergne University, Clermont-Ferrand, France} 
}

\maketitle

\begin{abstract}
The article proposes an exact approach to find the global solution of a nonconvex semivectorial bilevel optimization problem, where the objective functions at each level are pseudoconvex, and the constraints are quasiconvex. Due to its non-convexity, this problem is challenging, but it attracts more and more interest because of its practical applications. The algorithm is developed based on monotonic optimization combined with a recent neurodynamic approach, where the solution set of the lower-level problem is inner approximated by copolyblocks in outcome space. From that, the upper-level problem is solved using the branch-and-bound method. Finding the bounds is converted to pseudoconvex programming problems, which are solved using the neurodynamic method. The algorithm's convergence is proved, and computational experiments are implemented to demonstrate the accuracy of the proposed approach.
\end{abstract}

\begin{keywords}
Semivectorial bilevel optimization; pseudoconvex functions; neurodynamic method; monotonic optimization; branch-and-bound method
\end{keywords}

\section{Introduction}

The bilevel optimization problem was first examined with mathematical models by Bracken and McGill \cite{Bracken1973}, following a formulation in the form of the Stackelberg game \cite{Stackelberg1934}. The focal difference between them and the prevalent  single-level optimization problem is the hierarchical structure of the problem, where constraints for an optimization problem are affected by another problem. The affected problem is called the lower-level problem, while the other is called the upper-level problem. Despite getting attention lately, it has been crucial because of its application to practical problems. 

Unlike single-level optimization problems, which can be imagined as one agent trying to achieve objectives alone, bilevel ones are more like two agents interacting with each other by their former decision while following objectives. Furthermore, since the majority of realistic problems are affected by many agents, the number of bilevel optimization research has increased quickly in the last decades, and they have spread in all fields of application. Some practical ones such as the Nash equilibrium problem in economics, optimal operation control problem \cite{Kalashnikov2018}, management problem in supply chain systems \cite{Gao2011,Lukac2008}, traffic and transportation network design problem \cite{Ukkusuri2013,Angulo2014}, machine learning problem \cite{Franceschi2018,MacKay2019},... 
There are two main ways to classify bilevel optimization problems. One considers their type of functions, such as linear, convex, nonconvex, or discrete ones.  Others consider the context of the problem, like to cooperate optimistic problem in contrast to the pessimistic problem, optimization over the efficient set, multiobjective bilevel optimization problem, multi leaders/followers problem, bilevel with fuzzy and stochastic extension. Although in different contexts, they still share the same bilevel optimization formation. 

In solving method aspect, exact algorithms with classic approaches were researched for decades, in contrast to the heuristic group with later approaches. The exact methods that should be mentioned are extreme-point approaches, branch-and-bound, complementary pivoting, descent methods, penalty function methods, or trust-region methods. And heuristic groups contain evolutionary approaches, local search-based or neural network-based ones. Lu et al. wrote a conscientious survey about these approaches (see \cite{Lu2016}).

This paper considers a bilevel optimization problem in the form of optimization over an efficient set scenario. Also, in the lower-level problem, we examine vector function, a multiobjective optimization problem that makes an overall formulation called semivectorial bilevel optimization. This topic has been researched for years, with the main target being linear functions or convex ones in lower level or upper level \cite{Belkhiri2021,Horst1999,Jorge2005,Muu2000, Shehu2019, Horst2007,Le2003,Ren2016}. Popular approaches were the penalty method, which converts the problem into a single-level optimization problem \cite{White1993,Le2003, Fischer2022}, and the branch-and-bound method, which solves iterated relaxed subproblems for boundings \cite{Belkhiri2021,Horst1999,Muu2000}. Recently there have been some nonconvex ones such as \cite{Mitsos2008, Dempe2021}, but the algorithms are all developed using heuristic methods. Our research considers pseudoconvex objective functions on both levels and proposes an algorithm. By utilizing the nice properties of functions, the algorithm is developed based on monotonic optimization combined with a recent neurodynamic approach, where the solution set of the lower-level problem is inner approximated by copolyblocks in outcome space. Afterwards, the branch-and-bound technique is used to resolve the upper-level problem. The neurodynamic approach is then used to solve the resulting pseudoconvex programming problem for finding the bounds. 

In the next section, we present the equivalent outcome space problem of the considered one. Section 3 gives some mathematics bases for building the proposed algorithm. Section 4 presents the proposed algorithm. We prove the algorithm's convergence in Section 5 before showing some special cases of the upper-level objective function. Computational experiments are shown in Section 6, and the conclusion is ended in the last section.
\section{Preliminaries}
\subsection{Notations and definitions}
The positive orthant cone in $p$-dimension space an its interior are denoted as 
$\mathbb{R}_{+}^{p}:=\{x\in\mathbb{R}^{p}\mid x\geq0\}$
and ${\rm int}\mathbb{R}_{+}^{p}$, respectively. Let's recall the definition of \emph{nondominated point} by considering two vectors
$a,b\in\mathbb{R}^{p}$, where $a\leq b$ if all components
$a_{i}\leq b_{i}$, $i=1,2,\dots,p$ and similarly $a<b$ if $a_{i}<b_{i}$
for all $i=1,2,\dots,p$. Let $Q\subset\mathbb{R}^{p}$ be some nonempty set.
A point $q$ in $Q$ is considered an \textit{nondominated point} if there is no other point $q^\prime$ in $Q$ such that $q^\prime$ is less than or equal to $q$. Similarly, a point $q$ in $Q$ is considered a \textit{weakly nondominated point} if there is no other point $q^\prime$ in $Q$ such that $q^\prime$ is less than $q$. The set of all nondominated points and weakly nondominated
points of $Q$ are denoted ${\rm Min}Q$ and ${\rm WMin}Q$, respectively.

\begin{definition} 
(Clarke, 1983 \cite{Clarke1983}). Suppose $\varphi$ is a function that maps from $\mathbb{R}^n$ to $\mathbb{R}$ and is locally Lipschitz near a point $x$ in $\mathbb{R}^n$. We can define the generalized directional derivative of $\varphi$ at $x$, in the direction of a vector $v$ in $\mathbb{R}^n$, as follows
$$
\varphi^{\circ}(x ; v)=\limsup _{y \rightarrow x, t \downarrow 0} \frac{\varphi(y+t v)-\varphi(y)}{t} \text {. }
$$
\end{definition} 
The Clarke's generalized subgradient of $\varphi$ at $x$ is given by $\partial \varphi(x)=\left\{\xi \in \mathbb{R}^n: \varphi^{\circ}(x ; v) \geq \xi^{\mathrm{T}} v\right.$, for all $v$ in $\left.\mathbb{R}^n\right\}$.

\begin{definition} (Clarke, 1983 \cite{Clarke1983}). Suppose that $\varphi: \mathbb{R}^n \rightarrow \mathbb{R}$ is locally Lipschitz near $x \in \mathbb{R}^n$. The one-side directional derivative for any direction $v \in \mathbb{R}^n$ is defined by
$$
\varphi^{\prime}(x ; v)=\lim _{t \downarrow 0} \frac{\varphi(x+t v)-\varphi(x)}{t} \text {, }
$$
and we say $\varphi$ is regular at $x$, if the one-side directional derivative exists and for all $v \in \mathbb{R}^n, \varphi^{\circ}(x ; v)=\varphi^{\prime}(x ; v)$. Moreover, $\varphi$ is said to be regular on $\mathbb{R}^n$ provided $\varphi$ is regular at any $x \in \mathbb{R}^n$.
\end{definition} 


\begin{definition} (Clarke, 1983 \cite{Clarke1983}). Let $S \subseteq \mathbb{R}^n$ be a convex set, a function $\varphi: S \rightarrow \mathbb{R}$ is said to be a real-valued convex function provided that, for all $x, x^{\prime} \in S$ and $\lambda \in[0,1]$, one has $\varphi\left(\lambda x+(1-\lambda) x^{\prime}\right) \leq \lambda \varphi(x)+(1-\lambda) \varphi\left(x^{\prime}\right)$.
If $\varphi: K \rightarrow \mathbb{R}$ is convex, then $\partial \varphi(x)=\left\{\xi \in \mathbb{R}^n: \varphi(x)-\varphi\left(x^{\prime}\right) \leq \xi^{\mathrm{T}}\left(x-x^{\prime}\right)\right.$, for all $\left.x^{\prime} \in \mathbb{R}^n\right\}$.
\end{definition} 
It’s important to know that convex functions are regular, as stated in Proposition 2.3.6 in Clarke's 1983 work \cite{Clarke1983}.
\begin{definition}
(Penot \& Quang, 1997 \cite{Pen1997}). Given a nonempty convex subset $S$ of $\mathbb{R}^n$, a function $\varphi: K \rightarrow \mathbb{R}^n$ is defined as pseudoconvex on $S$ if and only if for any two distinct points $x$ and $x^\prime$ in $S$ satisfies
$$
\exists \eta \in \partial \varphi(x): \eta^{\mathrm{T}}\left(x^{\prime}-x\right) \geq 0 \Rightarrow \varphi\left(x^{\prime}\right) \geq \varphi(x)
$$
\end{definition} 

\begin{definition} [Cambini \& Martein, 2008 \cite{Cambini2008}] Let $G: \mathbb{R}^n \rightarrow \mathbb{R}$ be locally Lipschitz, then
\begin{itemize}
    \item [(1)]  $G$ is pseudoconvex if for any $x, \tilde{x} \in \mathbb{R}^n$,
$\exists \xi \in \partial G(x): \xi^{\top}(\tilde{x}-x) \geq 0 \Rightarrow G(\tilde{x}) \geq G(x)$;
\item [(2)] $G$ is quasiconvex if for any $x, \tilde{x} \in \mathbb{R}^n$ and $\lambda \in[0,1]$, $G(\lambda x+(1-\lambda) \tilde{x}) \leq \max \{G(x), G(\tilde{x})\}$.
\end{itemize}
\end{definition} 

\begin{theorem} [Cambini \& Martein, 2008 \cite{Cambini2008}] \label{pseudoconvex1}
    Let $z(x) = \dfrac{f(x)}{g(x)}$ be the ratio of two differentiable functions $f$ and $g$ defined on an open convex set $S \subset \mathbb{R}^n$.
    \begin{itemize}
        \item [(1)]  If $f$ is a convex function and $g$ is both positive and affine, then $z$ is a pseudoconvex function;
        \item [(2)] If $f$ is non-negative and convex, and $g$ is positive and concave, then $z$ is pseudoconvex;
    \end{itemize}
\end{theorem}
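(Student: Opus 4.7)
The plan is to verify pseudoconvexity through the differentiable characterization: since $f$ and $g$ are differentiable and $g$ is positive, $z$ is pseudoconvex on $S$ if and only if, for all $x,\tilde x\in S$, $\nabla z(x)^{\top}(\tilde x-x)\ge 0$ implies $z(\tilde x)\ge z(x)$. So first I would compute
\[
\nabla z(x)=\frac{g(x)\,\nabla f(x)-f(x)\,\nabla g(x)}{g(x)^{2}},
\]
and observe that, since $g(x)^{2}>0$, the hypothesis $\nabla z(x)^{\top}(\tilde x-x)\ge 0$ is equivalent to
\[
g(x)\,\nabla f(x)^{\top}(\tilde x-x)\;\ge\;f(x)\,\nabla g(x)^{\top}(\tilde x-x). \tag{$\ast$}
\]

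For part (1), I would use the affinity of $g$, which gives the exact identity $\nabla g(x)^{\top}(\tilde x-x)=g(\tilde x)-g(x)$. Substituting into $(\ast)$ and dividing by $g(x)>0$ yields $\nabla f(x)^{\top}(\tilde x-x)\ge \tfrac{f(x)}{g(x)}\bigl(g(\tilde x)-g(x)\bigr)$. Applying the subgradient inequality from convexity of $f$, $f(\tilde x)-f(x)\ge \nabla f(x)^{\top}(\tilde x-x)$, I obtain $f(\tilde x)\ge f(x)+\tfrac{f(x)}{g(x)}(g(\tilde x)-g(x))=\tfrac{f(x)g(\tilde x)}{g(x)}$, and dividing by $g(\tilde x)>0$ gives $z(\tilde x)\ge z(x)$.

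For part (2), concavity of $g$ only gives the one-sided inequality $\nabla g(x)^{\top}(\tilde x-x)\ge g(\tilde x)-g(x)$. To pass this through $(\ast)$ in the right direction I need to multiply by $f(x)$ without flipping the inequality; this is exactly where the hypothesis $f\ge 0$ is used, yielding $f(x)\,\nabla g(x)^{\top}(\tilde x-x)\ge f(x)\bigl(g(\tilde x)-g(x)\bigr)$. Chained with $(\ast)$ and divided by $g(x)>0$, this recovers $\nabla f(x)^{\top}(\tilde x-x)\ge \tfrac{f(x)}{g(x)}(g(\tilde x)-g(x))$, and then the convexity of $f$ together with division by $g(\tilde x)>0$ concludes $z(\tilde x)\ge z(x)$ exactly as in part (1).

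The computations are essentially routine; the only delicate point, and the one I would flag as the main obstacle, is the monotonicity of the multiplication by $f(x)$ in part (2). In part (1) the affinity of $g$ makes the sign of $f(x)$ irrelevant, but once $g$ is merely concave the concavity inequality points the "wrong way", and the argument only survives because $f(x)\ge 0$ preserves the direction of the inequality. I would therefore be careful to record in the proof exactly where each of the four structural assumptions ($g>0$, $g$ affine or concave, $f$ convex, $f\ge 0$ in (2)) is invoked, so that the reader can see that no assumption is superfluous.
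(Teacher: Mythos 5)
The paper does not prove this theorem at all: it is imported verbatim from Cambini and Martein (2008) as a known criterion for pseudoconvexity of ratios, so there is no internal proof to compare yours against. Your blind argument is a correct, self-contained first-order proof. The characterization you start from, $\nabla z(x)^{\top}(\tilde x-x)\ge 0 \Rightarrow z(\tilde x)\ge z(x)$, is exactly the paper's Definition of pseudoconvexity specialized to a differentiable function (where the Clarke subdifferential reduces to the gradient), the quotient-rule reduction to $(\ast)$ is valid because $g(x)^{2}>0$, and both chains of inequalities close correctly: in (1) the affine identity $\nabla g(x)^{\top}(\tilde x-x)=g(\tilde x)-g(x)$ lets you substitute without any sign hypothesis on $f(x)$, while in (2) the concavity inequality points the opposite way and must be multiplied by $f(x)\ge 0$ to be used, which is precisely the role of the non-negativity assumption. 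Your closing remark correctly isolates this as the only delicate step and accounts for where each of the structural hypotheses is used; this is consistent with the standard proof in the cited reference.
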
 
\begin{theorem}
    [Carosi \& Martein, 2006 \cite{Carosi2006}] \label{pseudoconvex2}
    Consider the ratio function $h(x) = \dfrac{1}{2}\dfrac{x^TAx}{(b^Tx + b_0)^3}$ where $A$ is a non-zero $n \times n$ symmetric matrix, $b \in \mathbb{R}^n$ and $b \in \mathbb{R}$. The function $h$ is pseudoconvex on $S = \{x \in \mathbb{R}^n: b^Tx + b_0 >0 \}$ if and only if $h$ is of the following form:
    \begin{equation*}
        h(x) = \dfrac{1}{2} \dfrac{\mu(b^Tx)^2}{(b^Tx + b_0)^3} \text{ where $b_0 < 0$}.
    \end{equation*}
    
\end{theorem}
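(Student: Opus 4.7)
The plan is to treat the two directions of this equivalence separately. The key observation driving both halves is that when $A$ happens to be proportional to the rank-one matrix $bb^T$, the function $h$ depends on $x$ only through the affine form $u(x) = b^Tx + b_0$, so the multivariate problem collapses to a univariate one on the half-line $u > 0$.

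For the sufficiency direction, assume $h(x) = \tfrac{1}{2}\mu(b^Tx)^2/(b^Tx+b_0)^3$ with $b_0 < 0$. Writing $b^Tx = u - b_0$, define $\phi : (0,\infty) \to \mathbb{R}$ by $\phi(u) = \mu(u-b_0)^2/(2u^3)$, so that $h = \phi \circ L$ with $L(x) = b^Tx + b_0$ affine. A direct differentiation gives
\[
\phi'(u) = \frac{\mu\,(u-b_0)(3b_0 - u)}{2u^4}.
\]
On $S$ we have $u > 0$ and, since $b_0 < 0$, both $u - b_0 > 0$ and $3b_0 - u < 0$; thus $\phi'(u)$ has constant sign (opposite to that of $\mu$) and never vanishes, so $\phi$ is strictly monotone on $(0,\infty)$. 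Since $\nabla h(x) = \phi'(u(x))\,b$, the pseudoconvexity inequality $\nabla h(x)^T(y-x) \geq 0$ reduces to $\phi'(u(x))\,(u(y) - u(x)) \geq 0$, and strict monotonicity of $\phi$ then yields $\phi(u(y)) \geq \phi(u(x))$, that is $h(y) \geq h(x)$. Hence $h$ is pseudoconvex on $S$.

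For the necessity direction, suppose $h$ is pseudoconvex on $S$. The plan is to exploit the standard second-order characterization of pseudoconvexity for $C^2$ functions (in the spirit of Crouzeix--Ferland): at every $x \in S$ with $\nabla h(x) \neq 0$, one must have $d^T \nabla^2 h(x)\, d \geq 0$ for all $d$ orthogonal to $\nabla h(x)$, while critical points must simultaneously be global minima. After computing $\nabla h$ and $\nabla^2 h$ from the quotient rule and simplifying, this condition must hold uniformly in $x \in S$. I would then evaluate the bordered-Hessian inequality at carefully chosen test points, such as points where $b^Tx = 0$ or where $Ax$ is parallel to $b$, and along directions $d \perp b$. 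These restrictions force $d^T A d = 0$ for every $d \perp b$, from which it follows that $\mathrm{range}(A) \subseteq \mathrm{span}(b)$ and hence $A = \mu bb^T$ for some $\mu \neq 0$ (using that $A$ is symmetric and nonzero). Substituting this form back into $h$ and replaying the sign analysis of $\phi'$ above then shows that pseudoconvexity on the nonempty open set $S$ requires $b_0 < 0$, completing the characterization.

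The main obstacle is the necessity direction. While the second-order test is conceptually clean, the algebra for $\nabla^2 h$ combines a quadratic form with a cubic denominator and produces several competing terms, so isolating the rank-one conclusion $A = \mu bb^T$ demands a judicious choice of test vectors that actually lie in $S$, and the critical-point case $\nabla h(x) = 0$ must be treated separately via the global-minimum condition. The sign constraint $b_0 < 0$ is then extracted by comparing the forced monotonicity direction of $\phi$ on $(0,\infty)$ with what pseudoconvexity of $h$ on a nonempty domain permits.
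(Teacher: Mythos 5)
The paper itself offers no proof of this statement: it is imported verbatim from Carosi and Martein (2006) as a cited tool, so there is nothing internal to compare against and your attempt must stand on its own. On that basis, your sufficiency direction is complete and correct: once $A=\mu bb^{T}$ the function collapses to $\phi(u)=\mu(u-b_0)^2/(2u^3)$ with $u=b^{T}x+b_0$, your derivative $\phi'(u)=\mu(u-b_0)(3b_0-u)/(2u^4)$ is right, and for $b_0<0$ both factors keep a fixed sign on $u>0$, so $\phi$ is strictly monotone and $h=\phi\circ L$ is pseudoconvex (indeed pseudolinear). A minor point worth stating explicitly: $\mu\neq 0$ because $A\neq 0$, and the case $b=0$ should be excluded or treated separately.

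The necessity direction, however, has a genuine gap. The Crouzeix--Ferland-type second-order condition only yields an inequality: at a point $x\in S$ with $Ax$ parallel to $b$, a direction $d\perp b$ satisfies $\nabla h(x)^{T}d=0$ and $d^{T}\nabla^{2}h(x)\,d=d^{T}Ad/u^{3}$, so quasiconvexity forces $d^{T}Ad\ge 0$ on $b^{\perp}$, not $d^{T}Ad=0$. The sentence ``these restrictions force $d^{T}Ad=0$ for every $d\perp b$'' is precisely the heart of the theorem and is asserted rather than derived; note that along any line $x+td$ with $d\perp b$ the denominator is constant and $t\mapsto h(x+td)$ is a convex parabola when $d^{T}Ad>0$, so no contradiction arises from single lines and a genuinely multivariate argument (or, as in Carosi--Martein, a Charnes--Cooper transformation reducing $h$ to a quadratic-type function with known pseudoconvexity characterization) is required to rule out $d^{T}Ad>0$. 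The extraction of $b_0<0$ is likewise only described: you should say explicitly that for $b_0\ge 0$ the factor $3b_0-u$ changes sign on $(0,\infty)$, so $\phi$ has an interior critical point that is not a global minimum (e.g.\ $n=1$, $h(x)=x^{2}/(2(x+1)^{3})$ has a critical point at $x=2$ with positive value while $h(0)=0$), contradicting pseudoconvexity. As written, only one implication of the equivalence is actually established.
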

\subsection{Problem formulation} \label{sec: prob_form}
The pseudoconvex semivectorial bilevel optimization problem under consideration can be expressed in the following manner
\begin{align}
    \min \  &{h}(x,y) \tag*{(BP)}\label{prob_SBP} \\
    \text{s.t.} \quad & {g}(x,y) \leq 0, y\in\mathbb{R}_{+}^{m}, \nonumber\\ 
                     & x \in {\text{Argmin}}\{f(x) \mid x\in X\},  \nonumber
\end{align}
where it will be assumed that

\noindent \textbf{(A1)} $X = \{x \in \mathbb{R}^n \mid s(x)\leq 0 \}$ is a nonempty bounded and convex set; 

\noindent \textbf{(A2)} The real-valued function ${h}: \mathbb{R}^m \times \mathbb{R}^n \rightarrow \mathbb{R}$ is continuous and the vector-valued functions ${g}: \mathbb{R}^m \times \mathbb{R}^n \rightarrow \mathbb{R}^{\ell}, {f}: \mathbb{R}^n \rightarrow \mathbb{R}^p, {s}: \mathbb{R}^n \rightarrow \mathbb{R}^q$ are continuously differentiable, where the integer numbers $m, n, \ell, p , q \geq 2$;

\noindent \textbf{(A3)} The objective functions $h, f$ are pseudoconvex and regular; 

\noindent \textbf{(A4)} The constraint functions $g, s$ are quasiconvex.

In the above formulation, the scalar function ${h}$ is called the upper-level objective function and the vectorial function ${f} = (f_1, f_2, \dots, f_p)$ is called the lower-level objective function. For convenience, as usual, we denote $X_{WE}$ as the weakly efficient solution set of the vector optimization problem $\min\{f(x) | x \in X \}$. Therefore, the constraint $x \in {\text{Argmin}}\{f(x) \mid x\in X\}$ can be replaced by $x \in X_{WE}$.

Problem \ref{prob_SBP}
covers a large class of complex nonconvex optimization problems, such as nonconvex multiplicative programming, optimization over the efficient set, and meta-learning problems in machine learning \cite{Franceschi2018,MacKay2019, tran2023framework, hoang2022improving}. The restricted form of  \ref{prob_SBP} for optimizing over the efficient set has been both studied in some previous work such as \cite{Tuy2016, Kim2016, thang2016solving, thang2016outcome, thang2015outcome} with the effective algorithms.

\subsection{Neurodynamic method for solving nonsmooth pseudoconvex programming problem} \label{sec:neuraldynamic}
We now consider a pseudoconvex programming problem as follows:
\begin{align} 
    \label{prob_SQ}
    \min\quad &r(x) \tag*{\ensuremath{(SQ)}}  \\ 
    s.t. \quad & x \in X,\nonumber 
\end{align}
where the objective function $r(x): \mathbb{R}^n \rightarrow \mathbb{R}$ is nonsmooth pseudoconvex, the constraint set $X = \{x \in \mathbb{R}^n\mid s(x)\leq 0\}$, $s(x):\mathbb{R}^n \rightarrow \mathbb{R}^m$ and $s_i(i=1, \ldots, m): \mathbb{R}^n \rightarrow \mathbb{R}$ are quasiconvex and differentiable. 

To tackle this problem, we employ the neurodynamic approach introduced by Liu \textit{et al}. \cite{Liu2021}. This method differs from others such as \cite{Malek2010, Bian2018, Li2014, Yu2019} because it permits using quasiconvex functions for inequality constraints. Even though the zero-level sets of both quasiconvex and convex functions are convex and a quasiconvex constraint can be substituted with an equivalent convex one, identifying such a replacement is typically not straightforward.\\
\indent To solve Problem \ref{prob_SQ},  Liu initially defines the set-valued function  $\Psi: \mathbb{R} \rightrightarrows [0,1]$, which is characterized by:
\begin{equation} \label{eq: Psi}
   \Psi(\xi)=\left\{
\begin{aligned} 
1, &\quad \xi>0 ; \\
[0,1], &\quad \xi=0 ; \\
0, &\quad \xi<0 .
\end{aligned}
\right.
\end{equation}

In fact, $\Psi(\xi)=\partial \max \{0, \xi\}$, and $\Psi$ is upper semicontinuous on $\mathbb{R}$.
Define the following function:
\begin{equation} \label{eq: S_m(x)}
   S_m(x)=\sum_{i=1}^m \max \left\{0, s_i(x)\right\} \text {, } 
\end{equation}
where $s_i(x),\ i=1,2, \ldots, m$ are given in \ref{prob_SQ}. The closed-form for $\partial S_m(x)$ is derived as
$$
\partial S_m(x)= \begin{cases}0, & x \in \operatorname{int}\left(X\right); \\ \sum_{i \in I_0(x)}[0,1] \partial s_i(x), & x \in \operatorname{bd}\left(X\right); \\ \sum_{i \in I_{+}(x)} \partial s_i(x)+\sum_{i \in I_0(x)}[0,1] \partial s_i(x), & x \notin X,\end{cases}
$$
where $I_{+}(x)=\left\{i \in\{1,2, \ldots, m\}: s_i(x)>0\right\}$, $I_0(x)=\left\{i \in\{1,2, \ldots, m\}: s_i(x)=0\right\}$, $\operatorname{int}\left(X\right)$ and $\operatorname{bd}\left(X\right)$ denote the interior and boundary of the feasible set $X$, respectively.

The neurodynamic model for solving Problem \ref{prob_SQ} are described in the form of a dynamic system as follows:
\begin{equation}
    \label{model}
    \begin{cases}
        x(0) &\in X; \\
        \dfrac{d}{d t} x(t) &\in-c(x(t)) \partial r(x(t))-\partial S(x(t)),
    \end{cases}
\end{equation}
where
\begin{equation} \label{eq: c(x)}
c(x(t))=\left\{\prod_{i=1}^{m} c_i(t) \mid c_i(t) \in 1-\Psi\left(s_i(x(t))\right), i=1,2, \ldots, m\right\}.
\end{equation}
In the neurodynamic model \eqref{model}, the state $x(0)$ is any initial state within $X$. The second expression is specified from the gradient descent method where the $-\partial S(x(t))$ term  serves to steer the state towards the feasible region $X$, while the term $-\partial r(x(t))$ causes the state to follow a descent direction of the objective function $r$. The $c(x(t))$ term adjusts the step-size of the state in model \eqref{model}. If the state is outside of $X$, the model will direct it towards $X$. If the state is within $X$, however, the model will direct it towards a feasible descent direction of $r$.

\begin{theorem} \cite[Theorem 4.3]{Liu2021}
The state $x(t)$ of
neural network \eqref{model} with any $x(0) \in X$, converges to an optimal solution of Problem \ref{prob_SQ}.
\end{theorem}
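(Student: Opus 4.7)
The plan is to combine three ingredients: (i) well-posedness of \eqref{model} together with positive invariance of $X$; (ii) a Lyapunov descent argument yielding subsequential convergence to a generalized KKT point; and (iii) invocation of pseudoconvexity to upgrade that stationary limit to a global optimum. The main obstacle is step (iii): a pure Lyapunov/LaSalle argument only identifies Clarke-stationary points, which in a nonconvex setting need not be optimal, and the pseudoconvexity hypothesis is precisely what is needed to close this gap.

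For (i), I would first verify that the right-hand side of \eqref{model} is a nonempty, convex, compact-valued, upper semicontinuous set-valued map. The subdifferentials $\partial r$ and $\partial S_m$ inherit these regularity properties from local Lipschitz continuity (Clarke), while the factor $c(x)$ lies in the compact interval $\prod_{i=1}^{m}\bigl(1-\Psi(s_i(x))\bigr)\subseteq[0,1]$, which is USC in $x$ because $\Psi$ is. Standard results on differential inclusions then yield an absolutely continuous solution on $[0,\infty)$. Invariance of $X$ is read off from the structure of \eqref{eq: c(x)} and \eqref{eq: S_m(x)}: inside $\operatorname{int}(X)$ one has $\partial S_m(x)=\{0\}$ and $c(x)=1$, so the flow is pure subgradient descent on $r$; outside $X$, $c(x)$ vanishes while $-\partial S_m$ is a descent direction for $S_m$, forcing the trajectory back toward $X$; at the boundary the two terms combine so that $\dot{S}_m(x(t))\leq 0$, whence $S_m(x(0))=0$ propagates to $S_m(x(t))\equiv 0$.

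For (ii), I would use $V(x)=r(x)$ as a Lyapunov function. Along any trajectory in $X$, measurable selections $\xi(t)\in\partial r(x(t))$ give $\dot x(t)=-c(x(t))\xi(t)$, and since $r$ is regular the nonsmooth chain rule yields $\dot V(x(t))=-c(x(t))\|\xi(t)\|^{2}\leq 0$. Thus $r(x(t))$ is nonincreasing and, since $X$ is bounded and $r$ continuous, converges to some value $r^{\ast}$. Integrating gives $\int_0^{\infty} c(x(t))\|\xi(t)\|^{2}\,dt<\infty$, so along a suitable sequence $t_k\to\infty$ one has $x(t_k)\to x^{\ast}\in X$ with $\xi(t_k)\to 0$, and a limiting argument on the normal-cone component coming from $-\partial S_m$ produces the generalized KKT inclusion $0\in\partial r(x^{\ast})+N_X(x^{\ast})$.

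For (iii), pick $\xi\in\partial r(x^{\ast})$ with $-\xi\in N_X(x^{\ast})$, so that $\xi^{\mathrm{T}}(x'-x^{\ast})\geq 0$ for every $x'\in X$; pseudoconvexity (Definition 2.5(1)) then yields $r(x')\geq r(x^{\ast})$ for all $x'\in X$, hence $x^{\ast}\in\text{Argmin}\{r(x):x\in X\}$. Since $r(x(t))\to r(x^{\ast})$ and $r$ is continuous, every $\omega$-limit point lies in the optimal set; to upgrade subsequential convergence to $x(t)\to x^{\ast}$ one combines the monotone decrease of $r(x(t))$ with a connectedness argument on the $\omega$-limit set (or a \L ojasiewicz-type tool if additional structure is available). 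The most delicate technical point that I expect to arise along the way is justifying the chain rule for $V\circ x$ across the three regions $\operatorname{int}(X)$, $\operatorname{bd}(X)$, and $X^{c}$, since $r$ is only regular and not smooth; this is where the regularity assumption on $r$ and the careful construction of $\Psi$ and $c$ in \cite{Liu2021} earn their keep.
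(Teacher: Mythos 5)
First, a point of comparison: the paper does not prove this statement at all --- it is imported verbatim as \cite[Theorem 4.3]{Liu2021}, so there is no in-paper argument to measure your proposal against. Your three-stage outline (well-posedness and invariance of $X$; Lyapunov descent to a Clarke-stationary point; pseudoconvexity to promote stationarity to global optimality) does track the general strategy used in the cited literature, and your step (iii) argument that $0\in\partial r(x^{\ast})+N_X(x^{\ast})$ together with Definition of pseudoconvexity forces $r(x')\geq r(x^{\ast})$ for all $x'\in X$ is correct as stated.

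There are, however, two genuine gaps. The first is the final convergence claim: the theorem asserts that the trajectory $x(t)$ converges to an optimal solution, not merely that $r(x(t))$ decreases to the optimal value or that some subsequence $x(t_k)$ has an optimal cluster point. Your proposed repair --- connectedness of the $\omega$-limit set or ``a \L ojasiewicz-type tool if additional structure is available'' --- does not close this: when the optimal set is not a singleton, a connected $\omega$-limit set contained in the optimal set is still compatible with the trajectory failing to converge to any single point, and no \L ojasiewicz structure is assumed here. The standard device, and the one that makes pseudoconvexity do real work beyond step (iii), is to take a fixed optimum $\bar{x}$ and use $V(x)=\tfrac{1}{2}\Vert x-\bar{x}\Vert^{2}$ (possibly added to $r$): pseudoconvexity implies that whenever $r(x)>r(\bar{x})$ one has $\xi^{\mathrm{T}}(\bar{x}-x)<0$ for \emph{every} $\xi\in\partial r(x)$, so $\tfrac{d}{dt}\Vert x(t)-\bar{x}\Vert^{2}\leq 0$ and the distance to each optimum is monotone; combined with the existence of one optimal cluster point this yields convergence of the whole trajectory. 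The second gap is your invariance argument: you assert that outside $X$ (and on $\operatorname{bd}(X)$) the term $-\partial S_m$ is a descent direction for $S_m$, but $S_m=\sum_i\max\{0,s_i\}$ is built from functions $s_i$ that are only \emph{quasiconvex}, so $S_m$ need not be convex and a generalized-gradient step need not decrease it; handling this is precisely the technical novelty of \cite{Liu2021} that the paper emphasizes in Section 2.3, and it requires an argument based on $\nabla s_i(x)^{\mathrm{T}}(y-x)\leq 0$ for $y$ in the sublevel set of a quasiconvex $s_i$, not the convex-subgradient inequality you implicitly invoke.
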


\section{The equivalent problem in outcome space}\label{sec:3}
We denote the outcome set $\mathcal{Z}:=\{z\in\mathbb{R}^{p}\mid z=f(x),x\in X\}$ and $\mathcal{G}=\left\{(x,y)\mid x\in X,y\in\mathbb{R}_{+}^{m},g(x,y)\le0\right\}$.
Since $X$ is bounded and $f$ consists of continuous functions, $\mathcal{Z}$
is bounded. We thus can find a box $[m,M]$ which contains $\mathcal{Z}$,
i.e., $m\leq\mathcal{Z}\leq M$. To determine $m$, it is sufficient
to solve the following problems for each component $m_{i},i=1,2,\dots,p$,
\begin{equation}\label{Pm_i}
\min\;f_{i}(x),\;\;{\rm s.t.}\;x\in X.    \tag*{\ensuremath{(P^m_i)}}
\end{equation}
Since $f_{i}(x)$ is a pseudoconvex function, any local minimum
of the above equation is also a global optimum \cite{Avriel1988}. It is straight forward to apply the neurodynamic model presented in Section \ref{sec:neuraldynamic} to \ref{Pm_i} as follow:
\begin{equation} \tag*{\ensuremath{Solve(P^m_i)}}
    \begin{cases}
        x(0) &\in X; \nonumber\\ 
        \dfrac{d}{d t} x(t) &\in-c(x(t)) \partial f_i(x(t))-\partial S_m(x(t)),
    \end{cases}
\end{equation}
where $S_m(x),\ c(x(t))$ are defined from \eqref{eq: S_m(x)} and \eqref{eq: c(x)}. However, the similar reasoning cannot be applied to find $M$ because the problem
$\max\{ f_{i}(x),\;\;{\rm s.t.}\;x\in X \}$ maximizing a pseudoconvex function over a convex set that is nonconvex. For
that purpose, we bound $X$ in a simplex $\Delta$ with vertex set
$V(\Delta)=\{\Delta^{0},\Delta^{1},\dots,\Delta^{n}\}$, where $\Delta^{0}=\min\{x\in\mathbb{R}^{n}\mid x\in X\}$
and $\Delta^{i}=(\Delta_{1}^{i},\Delta_{2}^{i},\dots,\Delta_{n}^{i}),i=1,2,\dots,n$
are defined by

\[
\Delta_{k}^{i}=\begin{cases}
\Delta_{k}^{0}, & {\rm if}\;k\neq i;\\
U-\sum_{j\neq k}\Delta_{j}^{0}, & {\rm if}\;k=i,
\end{cases}
\]
where $U$ is the optimal value of the problem: $\max\{\langle e,x\rangle\mid x\in X\},e\in\mathbb{R}^{n},e=(1,1,\dots,1)^{T}$.
This definition combined with the fact that $f$ is a pseudoconvex function leads to $X\subset\Delta$ and therefore a way to
specify $M$, such as
\begin{align*} \label{PM_i}  \tag*{\ensuremath{(P^M_i)}}
M_{i}: & =\max\{f_{i}(x)\mid x\in V(\Delta)\}\\
 & =\max\{f_{i}(x)\mid x\in\Delta\}\\
 & \geq\max\{f_{i}(x)\mid x\in X\},i=1,2,\dots,p.
\end{align*}

We can see that $\mathcal{Z}$ is not full-dimensional and nonconvex. We therefore define $\mathcal{Z}^{+}:=\mathcal{Z}+\mathbb{R}_{+}^{p}=\{z\in\mathbb{R}^{p}\mid\exists z^{0}\in\mathcal{Z},z^{0}\leq z\}$ which is a full-dimentional convex set. One problem of $\mathcal{Z}^{+}$ is that it is not bounded, we then bound the set $\mathcal{Z}$ by another equivalently efficient
set denoted as $\mathcal{Z}^\diamond$,
\begin{align*}
\mathcal{Z}^{\diamond}:=\; & \mathcal{Z}^{+}\cap(M-\mathbb{R}_{+}^{p}).
\end{align*}
The efficient equivalence of $\mathcal{Z}, \mathcal{Z}^+$ and $\mathcal{Z}^{\diamond}$ is shown in Proposition \ref{lem_MinY} where 
${\rm Min}\mathcal{Z},\ {\rm Min}\mathcal{Z}^{+}$ and ${\rm Min}\mathcal{Z}^{\diamond}$ denote the efficient sets of $\mathcal{Z},\ \mathcal{Z}^+$ and $\mathcal{Z}^\diamond$, respectively.
\begin{proposition}[\cite{Thang2020}] \label{lem_MinY} We have
i) $\;{\rm Min}\mathcal{Z}={\rm Min}\mathcal{Z}^{+}={\rm Min}\mathcal{Z}^{\diamond}$;

ii) ${\rm WMin}\mathcal{Z}={\rm WMin}\mathcal{Z}^{+}\cap\mathcal{Z}={\rm WMin}\mathcal{Z}^{\diamond}\cap\mathcal{Z}$.

\end{proposition}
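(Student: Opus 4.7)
The plan is to argue entirely from the definitions of $\mathcal{Z}^+$ and $\mathcal{Z}^\diamond$, using two structural facts: every $z' \in \mathcal{Z}^+$ decomposes as $z' = z^0 + r$ with $z^0 \in \mathcal{Z}$ and $r \in \mathbb{R}_+^p$, and the bound $\mathcal{Z} \subseteq [m,M]$ guarantees $\mathcal{Z} \subseteq M - \mathbb{R}_+^p$, so that $\mathcal{Z} \subseteq \mathcal{Z}^\diamond \subseteq \mathcal{Z}^+$. Both parts are then proved by chasing the order relations $\leq$ and $<$ through these inclusions.

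For part (i), first I would establish $\mathrm{Min}\,\mathcal{Z} = \mathrm{Min}\,\mathcal{Z}^+$. The forward direction: if $z \in \mathrm{Min}\,\mathcal{Z}$ were dominated in $\mathcal{Z}^+$ by some $z' = z^0 + r$, then $z^0 \leq z' \leq z$ with at least one strict component; minimality of $z$ in $\mathcal{Z}$ forces $z^0 = z$ and hence $r = 0$ and $z' = z$, a contradiction. Conversely, writing any $z \in \mathrm{Min}\,\mathcal{Z}^+$ as $z^0 + r$ with $z^0 \in \mathcal{Z}$, the inequality $z^0 \leq z$ together with minimality in $\mathcal{Z}^+$ forces $z = z^0 \in \mathcal{Z}$, after which the inclusion $\mathcal{Z} \subseteq \mathcal{Z}^+$ makes minimality transfer back. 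For $\mathrm{Min}\,\mathcal{Z}^+ = \mathrm{Min}\,\mathcal{Z}^\diamond$: any $z \in \mathrm{Min}\,\mathcal{Z}^+$ lies in $\mathcal{Z}$ by the previous step, hence in $\mathcal{Z}^\diamond$; conversely, for $z \in \mathrm{Min}\,\mathcal{Z}^\diamond$, any candidate dominator $z' \in \mathcal{Z}^+$ of $z$ automatically satisfies $z' \leq z \leq M$, so $z' \in \mathcal{Z}^\diamond$ and minimality in $\mathcal{Z}^\diamond$ already rules it out.

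Part (ii) is analogous but with strict componentwise inequality $<$. The inclusion $\mathrm{WMin}\,\mathcal{Z} \subseteq \mathrm{WMin}\,\mathcal{Z}^+ \cap \mathcal{Z}$ uses that $z' < z$ with $z' = z^0 + r$ implies $z^0 \leq z' < z$ componentwise, hence $z^0 < z$, contradicting weak nondominance of $z$ in $\mathcal{Z}$. The reverse inclusion follows directly from $\mathcal{Z} \subseteq \mathcal{Z}^+$. The chain with $\mathcal{Z}^\diamond$ is handled by the same two ingredients as in (i): the inclusions $\mathcal{Z} \subseteq \mathcal{Z}^\diamond \subseteq \mathcal{Z}^+$ and the observation that a strict componentwise dominator of a point below $M$ is itself below $M$.

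There is essentially no hard step in this proof; the only subtle point, and the reason why the intersection with $\mathcal{Z}$ appears in part (ii) but not in (i), is that the boundary of the free-disposal hull $\mathcal{Z}^+$ contains weakly nondominated points lying outside $\mathcal{Z}$ itself (for instance, the horizontal and vertical portions of the efficient staircase when $p=2$), which must be excluded by intersecting back with $\mathcal{Z}$. No analogous pathology arises for (strict) nondominance, since the decomposition argument shows that every element of $\mathrm{Min}\,\mathcal{Z}^+$ must actually belong to $\mathcal{Z}$.
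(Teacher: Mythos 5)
Your proposal is correct. Note, however, that the paper itself gives no proof of this proposition: it is imported verbatim from the cited prior work \cite{Thang2020}, so there is no in-paper argument to compare against. Your direct verification from the definitions is sound and complete: the decomposition $z'=z^0+r$ with $z^0\in\mathcal{Z}$, $r\in\mathbb{R}_+^p$ handles the passage between $\mathcal{Z}$ and $\mathcal{Z}^+$ for both $\leq$ and $<$, and the observation that any dominator (weak or strict) of a point $z\leq M$ again satisfies $z'\leq M$, hence stays in $\mathcal{Z}^\diamond=\mathcal{Z}^+\cap(M-\mathbb{R}_+^p)$, handles the passage between $\mathcal{Z}^+$ and $\mathcal{Z}^\diamond$. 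The one hypothesis you should state explicitly rather than leave implicit is that $M$ is a componentwise upper bound for $\mathcal{Z}$ (so that $\mathcal{Z}\subseteq\mathcal{Z}^\diamond$); this is guaranteed by the paper's construction of $M_i$ via the enclosing simplex in Section 3. Your closing remark correctly identifies why the intersection with $\mathcal{Z}$ is indispensable in part (ii) but redundant in part (i): elements of ${\rm Min}\,\mathcal{Z}^+$ are forced back into $\mathcal{Z}$ by the decomposition argument, whereas ${\rm WMin}\,\mathcal{Z}^+$ genuinely contains points outside $\mathcal{Z}$.
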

We will use some concepts of monotonic optimization presented in \cite{Thang2020}. A real-valued function $d$ is said to be increasing (decreasing) over $S \in \mathbb{R}^n$ if and only if $d(x) \geq d(y)\ (d(x) \leq d(y),\ \forall x \in y + \mathbb{R}_+^n$. A set $Q\in\left[m,M\right]$ is called \textit{normal} if $\left[m,z\right]\in Q,\forall z\in Q$ and \textit{conormal} if $\left[z,M\right]\in Q,\forall z\in Q$. It is obvious that $\mathcal{Z}^\diamond$ is a conormal set. We then convert Problem \ref{prob_SBP} into a monotonic problem by constructing
a function $\varphi: \mathcal{Z}^{\diamond} \rightarrow\mathbb{R}$ as follows
\begin{equation*}\tag*{\ensuremath{MP(z)}} \label{eq: MP(z)}
  \varphi(z)=\min\{h(x,y)\mid (x,y)\in \mathcal{G},f(x)\leq z\}.  
\end{equation*}

As we can see, when $z$ increases, $\varphi(z)$ decreases due to
the expansion of decision space $X$, so $\varphi$ is a decreasing
function over the conormal set $\mathcal{Z}^\diamond$.

\begin{proposition}\label{prop-QWP_Y}The monotonic Problem \ref{prob_SBP} is equivalent
to the outcome-space problem
\begin{equation}
\min\{\varphi(z)\mid z\in{\rm WMin}\mathcal{Z}^\diamond\},\tag*{(OP)}\label{prob_OP}
\end{equation}
which means that \ref{prob_SBP} is solved if and only if \ref{prob_OP} is solved.
\end{proposition}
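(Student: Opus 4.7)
The plan is to establish the equivalence by showing that the optimal values of \ref{prob_SBP} and \ref{prob_OP} coincide and that their optimal solutions correspond via $(x^{*},y^{*})\mapsto z^{*}=f(x^{*})$. Everything will hinge on a single observation, which I would prove first: whenever $z\in\mathrm{WMin}\,\mathcal{Z}^{\diamond}$ and $x\in X$ satisfies $f(x)\le z$, one automatically has $x\in X_{WE}$.

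I would prove this claim by contradiction. If $x\notin X_{WE}$, there exists $x'\in X$ with $f(x')<f(x)\le z$, and hence $f(x')<z$. Since $f(x')\in\mathcal{Z}\subset\mathcal{Z}^{+}$ and, by the construction of $M$, $f(x')\le M$, the point $f(x')$ lies in $\mathcal{Z}^{+}\cap(M-\mathbb{R}_{+}^{p})=\mathcal{Z}^{\diamond}$, contradicting $z\in\mathrm{WMin}\,\mathcal{Z}^{\diamond}$. This step is where I expect the only real difficulty: it is the unique place where the precise shape of $\mathcal{Z}^{\diamond}$, namely the intersection with $M-\mathbb{R}_{+}^{p}$, is used, and without this boundedness from above the dominating point $f(x')$ might not lie in the reference set and the contradiction would evaporate.

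Granted the claim, the inequality $\mathrm{val}(OP)\le\mathrm{val}(BP)$ follows quickly. Take $(x^{*},y^{*})$ optimal for \ref{prob_SBP} and set $z^{*}=f(x^{*})$. Since $x^{*}\in X_{WE}$ one has $z^{*}\in\mathrm{WMin}\,\mathcal{Z}$, and Proposition~\ref{lem_MinY}(ii) then gives $z^{*}\in\mathrm{WMin}\,\mathcal{Z}^{\diamond}$, so $z^{*}$ is feasible for \ref{prob_OP}; feasibility of $(x^{*},y^{*})$ for $MP(z^{*})$ then yields $\varphi(z^{*})\le h(x^{*},y^{*})$, whence $\mathrm{val}(OP)\le\mathrm{val}(BP)$.

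For the reverse inequality, fix any $z\in\mathrm{WMin}\,\mathcal{Z}^{\diamond}$ and let $(\tilde x,\tilde y)$ attain the minimum in $MP(z)$. By the claim, $\tilde x\in X_{WE}$, so $(\tilde x,\tilde y)$ is feasible for \ref{prob_SBP}, and $\varphi(z)=h(\tilde x,\tilde y)\ge\mathrm{val}(BP)$; infimizing over $z$ produces $\mathrm{val}(OP)\ge\mathrm{val}(BP)$. The correspondence of optimizers then reads off the same chain of inequalities: an optimal $(x^{*},y^{*})$ of \ref{prob_SBP} yields the optimal $z^{*}=f(x^{*})$ of \ref{prob_OP}, while for any optimal $z^{*}$ of \ref{prob_OP} every minimizer $(\tilde x,\tilde y)$ of $MP(z^{*})$ is optimal for \ref{prob_SBP}, completing the equivalence.
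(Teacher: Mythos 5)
Your proposal is correct, and it follows the same broad strategy as the paper (compare optimal values in both directions via $\varphi$ and $MP(z)$, using Proposition~\ref{lem_MinY}(ii) to pass between ${\rm WMin}\mathcal{Z}$ and ${\rm WMin}\mathcal{Z}^{\diamond}$), but it is organized around a key claim that the paper's proof never states or proves: that $z\in{\rm WMin}\mathcal{Z}^{\diamond}$ and $f(x)\le z$ together force $x\in X_{WE}$. This is not a cosmetic difference. The paper's forward direction only establishes that the minimizer $(\bar x,\bar y)$ of $MP(\bar z)$ satisfies $h(\bar x,\bar y)\le h(x,y)$ over the feasible set of \ref{prob_SBP}; it silently skips verifying that $(\bar x,\bar y)$ is itself \emph{feasible} for \ref{prob_SBP}, i.e.\ that $\bar x\in X_{WE}$, without which the conclusion ``$(\bar x,\bar y)$ is optimal for \ref{prob_SBP}'' does not follow. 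Your claim, and its proof by exhibiting a dominating point $f(x')<z$ lying in $\mathcal{Z}^{+}\cap(M-\mathbb{R}_{+}^{p})=\mathcal{Z}^{\diamond}$, is exactly the missing step, and you are right that the truncation by $M-\mathbb{R}_{+}^{p}$ is what makes the contradiction go through. Your value-function formulation ($\mathrm{val}(OP)=\mathrm{val}(BP)$ plus the correspondence of minimizers) also handles the reverse direction more cleanly than the paper's somewhat circular ``$\varphi(z)$ reaches its optimal value if and only if $z=f(\bar x)$''. The only point to flag is that when you ``let $(\tilde x,\tilde y)$ attain the minimum in $MP(z)$'' you are implicitly assuming $MP(z)$ is feasible with attained infimum; adopting the convention $\varphi(z)=+\infty$ for infeasible $z$ (as the algorithm effectively does when it discards such vertices) makes that direction airtight.
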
\begin{proof} As presented in Section \ref{sec: prob_form}, the problem \ref{prob_SBP} is equivalent to
$$\min\{h(x,y)\mid g(x,y)\le0,x\in X_{WE},y\in\mathbb{R}_{+}^{m}\},$$
whose constraints can be rewritten as,
$$\{ x\in X,y\in\mathbb{R}_{+}^{m} \mid g(x,y)\le0,f(x)\in{\rm WMin}\mathcal{Z} \}$$
which can be reduced as follows,
\begin{align*}
    \{(x,y)\in\mathcal{G} \mid f(x) \in {\rm WMin}\mathcal{Z}\} &= \{(x,y)\in\mathcal{G} \mid f(x) \in {\rm WMin}\mathcal{Z}^\diamond \cap \mathcal{Z} \} \\
    &= \{(x,y)\in\mathcal{G} \mid f(x) \in {\rm WMin}\mathcal{Z}^\diamond \},
\end{align*}
On the one hand, suppose $\bar{z}$ is optimal solution of $\min\{\varphi(z)\mid z\in{\rm WMin}\mathcal{Z}^\diamond\}$ and $(\bar{x},\bar{y})$ is optimal solution of \textit{MP($\bar{z}$)}. Since $\varphi(\bar{z})$ reaches its optimal value at $(\bar{x},\bar{y})$, we get
\begin{equation}
\begin{cases}
\varphi(\bar{z})=h(\bar{x},\bar{y});\\
f(\bar{x})\le\bar{z};\\
h(\bar{x},\bar{y})\le h(x,y) & \forall (x,y)\in\mathcal{G},f(x)\le\bar{z}.
\end{cases}\label{eq:hx_1}
\end{equation}

thus we have
\[
h(\bar{x},\bar{y})\le h(x,y)\;\;\;\;\;\forall (x,y)\in\mathcal{G},x \in {\rm Wmin} \mathcal{Z}^\diamond.
\]

Therefore $(\bar{x},\bar{y})$ is also optimal solution of \ref{prob_SBP}.\\
On the other hand, suppose we have a solution $(\bar{x},\bar{y})$ of Problem \ref{prob_SBP}, we then have $f(\bar{x}) \in {\rm Wmin}\mathcal{Z}^\diamond$ and $\varphi(z)$ reaches its optimal value if and only if $z = f(\bar{x})$ implying $\min \varphi(z) = \varphi(f(\bar{x}))$ implying $z = f(\bar{x})$ is a solution to \ref{prob_OP}.





Therefore, we can solve \ref{prob_OP} in order to globally solve
\ref{prob_SBP}.\end{proof}

\section{The branch-and-bound scheme for solving Problem (OP)}
\subsection{Cutting cones and inner approximations of $\mathcal{Z}^\diamond$}
In our previous work \cite{Thang2020}, it is established that the union of an arbitrary union of normal (respectively, conormal) sets is itself a normal (respectively, conormal) set. The union of all normal (respectively, conormal) sets contained in $Q$ is denoted as the normal (respectively, conormal) hull of $Q$, represented by $\mathcal{N}\left(Q\right)$ (respectively, $\mathcal{M}\left(Q\right)$), which is also the minimal conormal set containing $Q$. A polyblock $\mathcal{P}$ is defined as the normal hull of a finite set of vertices $V$ within the interval $\left[m,M\right]$, such that $\mathcal{P}=\bigcup_{v\in V}\left[m,v\right]$ or equivalently $\mathcal{P}=\mathcal{N}\left(V\right)$. A copolyblock $\mathcal{P}$ is defined as the conormal hull of a finite set of vertices $V$ within the interval $\left[m,M\right]$, such that $\mathcal{P}=\bigcup_{v\in V}\left[v,M\right]$ or equivalently $\mathcal{P}=\mathcal{M}\left(V\right)$.

Let $v$ be a vertex in $\mathcal{N}(Q)$. $v$ is \textit{proper} if $\nexists v^\prime \in \mathcal{N}(Q)$ such that $v^\prime \neq v$ and $v^\prime \geq v$. An \textit{improper} vertex in $\mathcal{N}(Q)$ is not proper. A polyblock is determined by its set of proper vertices as it is the normal hull of its proper vertices. Similarly, a copolyblock is the conormal hull of its proper vertices where a proper vertex $v$ satisfies $\nexists v^\prime \in \mathcal{M}(Q)$ such that $v^\prime \neq v$ and $v^\prime \leq v$.

In our research, we developed an inner approxiamation algorithm, so instead
of determining a outer copolyblock by its proper vertices $V$, we
regard the proper vertices as ones of the opposite polyblock and construct the inner copolyblock as follows
\[
\mathcal{L}\left(V\right)\coloneqq\left[m,M\right]\setminus{\rm int}\left(\mathcal{N}\left(V\right)-\mathbb{R}_{+}^{p}\right).
\]

And for convenience, throughtout our paper, copolyblock is understood
as one built by this way, the inner copolyblock $\mathcal{L}\left(V\right)$
constructed above generates new proper vertices and initial proper
vertices $V$ is now called \textit{co-proper} vertices. Now we recall some main properties of copolyblocks in following proposition.

\begin{proposition}[\cite{Thang2020}]\label{prop:copolyblock}
\begin{enumerate}
    \item[(i)] A finite union of copolyblocks is a copolyblock.
    \item[(ii)] The union of a finite number of conormal set forms a compact conormal set.
    \item[(iii)]  The intersection of a finite number of copolyblocks forms a compact conormal set.
\end{enumerate}
\end{proposition}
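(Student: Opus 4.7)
The plan is to derive each of the three properties from the definitions of copolyblock, conormal set, and normal/conormal hull recalled just above the proposition, together with the observation that every copolyblock $\mathcal{L}(V)\subset [m,M]$ is closed and bounded.

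For part (ii), I would argue directly from the definition of conormality. Given conormal sets $Q_{1},\dots,Q_{k}\subset [m,M]$ and a point $z\in\bigcup_{i=1}^{k}Q_{i}$, choose $i$ with $z\in Q_{i}$; conormality of $Q_{i}$ gives $[z,M]\subset Q_{i}\subset \bigcup_{j=1}^{k}Q_{j}$, so the union is conormal. Boundedness is automatic from $Q_{i}\subset[m,M]$, and since each $Q_{i}$ is closed, the finite union is closed, hence compact.

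For part (iii), I would first record that each copolyblock $\mathcal{P}_{j}=\mathcal{L}(V_{j})$ is closed (it is $[m,M]$ minus an open set) and conormal: if $z\in\mathcal{P}_{j}$ and $z'\geq z$, then $z'\not< v$ for any $v\in V_{j}$, so $z'\in\mathcal{P}_{j}$. Given $z\in\bigcap_{j}\mathcal{P}_{j}$, applying conormality in each factor yields $[z,M]\subset \mathcal{P}_{j}$ for every $j$, hence $[z,M]\subset \bigcap_{j}\mathcal{P}_{j}$. Closedness and boundedness of the intersection are then immediate, so the intersection is compact.

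Part (i) is the step I expect to be the main obstacle, since the paper uses the inner copolyblock $\mathcal{L}(V)=[m,M]\setminus\mathrm{int}(\mathcal{N}(V)-\mathbb{R}_{+}^{p})$ rather than the simpler $\mathcal{M}(V)$. By induction it suffices to treat two sets $\mathcal{L}(V_{1})$ and $\mathcal{L}(V_{2})$. Using the characterization that $z\in\mathcal{L}(V)$ iff no $v\in V$ strictly dominates $z$, a direct De Morgan computation gives
\[
\mathcal{L}(V_{1})\cup\mathcal{L}(V_{2})=\mathcal{L}(V^{\ast}),\qquad V^{\ast}=\{\min(v_{1},v_{2}):v_{1}\in V_{1},\,v_{2}\in V_{2}\},
\]
with the minimum taken component-wise. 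Since $V^{\ast}$ is finite, this exhibits the union as a copolyblock. The delicate part is the bookkeeping: one must verify that $V^{\ast}$ (or the co-proper subset obtained by discarding dominated vertices as in the discussion preceding the proposition) is a genuine finite vertex set in $[m,M]$, and check that the identity survives the passage to the interior inside the set-complement. An equivalent route would be to recast $\mathcal{L}(V)$ as $[m,M]\cap\mathcal{M}(V')$ for an appropriately translated vertex set $V'$, apply the obvious identity $\mathcal{M}(V_{1}')\cup\mathcal{M}(V_{2}')=\mathcal{M}(V_{1}'\cup V_{2}')$, and then intersect with $[m,M]$ again.
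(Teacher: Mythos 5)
The paper itself gives no proof of this proposition; it is quoted verbatim from \cite{Thang2020}, so there is no in-paper argument to compare yours against. Judged on its own terms, your proposal is essentially sound and, for parts (i) and (iii), complete in all but routine detail. For (i), the identity
$\mathcal{L}(V_1)\cup\mathcal{L}(V_2)=\mathcal{L}(V^{*})$ with $V^{*}$ the set of componentwise minima is exactly the right mechanism, and the step you flag as delicate is in fact unproblematic: since $\mathcal{N}(V)-\mathbb{R}_+^p=\bigcup_{v\in V}(v-\mathbb{R}_+^p)$ and for any $z$ in the interior of this finite union one has $z+\epsilon e\leq v$ for some $v\in V$ and small $\epsilon>0$, the interior is precisely $\bigcup_{v\in V}\{z: z<v\}$, which validates your characterization ``$z\in\mathcal{L}(V)$ iff no $v\in V$ strictly dominates $z$'' and hence the De Morgan computation. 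The elements of $V^{*}$ lie in $[m,M]$ because $V_1,V_2$ do, and passing to the co-proper subset changes nothing. Part (iii) is likewise correct: each $\mathcal{L}(V)$ is closed in the compact box and conormal, and both properties pass to finite intersections.

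The one genuine gap is in part (ii). The definition of conormality recalled in the paper ($[z,M]\subseteq Q$ for all $z\in Q$) does not force $Q$ to be closed --- in one dimension $(a,M]$ is conormal but not closed --- so your step ``since each $Q_i$ is closed, the finite union is closed, hence compact'' assumes a hypothesis that is not given. As stated, the union of finitely many conormal sets is conormal and bounded but need not be compact; compactness requires restricting to closed (e.g.\ compact) conormal sets or to copolyblocks, which is presumably what \cite{Thang2020} intends. You should either add that hypothesis explicitly or note that the conormality claim is all that survives without it.
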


\begin{proposition}\label{prop:new_v}
Given $v\in\mathcal{Z}^{\diamond}$ and $w_{v}$ determined by Remark \ref{remark:P_0_v}, the new copolyblock $\mathcal{P}'$ obtained by applying the cutting cone of $\mathcal{\mathcal{Z}^{\diamond}}$ on $\mathcal{P}$ at $w_{v}$ has vertex set $V',$ where $\mathcal{P}$ is a copolyblock in the box $[m,M]$ with co-proper vertex set $V$ such that $\mathcal{P}\subseteq\mathcal{\mathcal{Z}^{\diamond}} $, and 
\[
V'=\ensuremath{(V\setminus\{v\})\cup\{v-(v_{i}-w_{i})e^{i}\},\quad i=1,\dots,p.}
\]
\end{proposition}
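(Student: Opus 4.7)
The plan is to compute directly the effect of applying the cutting cone of $\mathcal{Z}^{\diamond}$ at $w_{v}$ on the inner copolyblock $\mathcal{P} = \mathcal{L}(V)$. Starting from the representation
\[
\mathcal{L}(V) = [m,M] \setminus \mathrm{int}\bigl(\mathcal{N}(V) - \mathbb{R}_{+}^{p}\bigr), \qquad \mathcal{N}(V) = \bigcup_{u \in V}[m,u],
\]
I would isolate the contribution of the vertex being cut by writing $\mathcal{N}(V) = \mathcal{N}(V \setminus \{v\}) \cup [m,v]$. Because $\mathcal{Z}^{\diamond}$ is conormal and $w_{v} \in \mathcal{Z}^{\diamond}$, the half-open box $(w_{v},v] = \{x \in [m,v] : x > w_{v}\}$ is contained in $\mathcal{Z}^{\diamond}$; removing this box from $\mathcal{N}(V)$, equivalently adjoining it to $\mathcal{L}(V)$, is precisely the effect that the cutting cone operation of \cite{Thang2020} is designed to produce on $\mathcal{P}$, and it automatically preserves the inclusion $\mathcal{P}' \subseteq \mathcal{Z}^{\diamond}$.

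Next, I would verify that the proposed $V'$ produces exactly this update. Writing $v^{(i)} := v - (v_{i} - w_{i})e^{i}$, one has $v^{(i)}_{j} = v_{j}$ for $j \neq i$ and $v^{(i)}_{i} = w_{i}$, so $[m,v^{(i)}] = [m,v] \cap \{x : x_{i} \leq w_{i}\}$ and consequently
\[
\bigcup_{i=1}^{p} [m,v^{(i)}] = [m,v] \cap \bigl\{x : \exists\, i,\ x_{i} \leq w_{i}\bigr\} = [m,v] \setminus (w_{v},v].
\]
Combined with the fact that $\mathcal{N}(V \setminus \{v\})$ is unchanged, this yields $\mathcal{N}(V') = \mathcal{N}(V) \setminus (w_{v},v]$ and hence $\mathcal{L}(V') = \mathcal{L}(V) \cup (w_{v},v] = \mathcal{P}'$, as claimed. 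Co-properness of the new vertices then follows from the same computation: for $i \neq j$ the identities $v^{(i)}_{i} = w_{i} < v_{i} = v^{(j)}_{i}$ and $v^{(i)}_{j} = v_{j} > w_{j} = v^{(j)}_{j}$ make the $p$ new vertices pairwise incomparable in the componentwise order, while non-dominance of $v^{(i)}$ by each retained $u \in V \setminus \{v\}$ follows from the properness of $v$ in $V$ as soon as $v$ exceeds $u$ in some coordinate $j_{0} \neq i$, so that $v^{(i)}_{j_{0}} = v_{j_{0}} > u_{j_{0}}$.

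The main obstacle I anticipate is the bookkeeping for the cutting-cone operation itself, namely confirming that adjoining the half-open box $(w_{v},v]$ to $\mathcal{L}(V)$ is the correct formalisation of the procedure from \cite{Thang2020} in the present inner-approximation setting, and pinning down how the specific choice of $w_{v}$ given by Remark \ref{remark:P_0_v} interacts with the componentwise structure of $v$. A secondary technicality is the degenerate case in which $v$ and some $u \in V \setminus \{v\}$ differ only in the $i$-th coordinate: then $v^{(i)}$ is dominated by $u$ and should be silently discarded from $V'$ without altering $\mathcal{L}(V')$, since $[m,v^{(i)}] \subseteq [m,u] \subseteq \mathcal{N}(V \setminus \{v\})$ in that case.
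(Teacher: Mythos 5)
The paper states Proposition \ref{prop:new_v} without proof (it is carried over from the authors' earlier work \cite{Thang2020}), so there is no in-text argument to compare against. Your proposal is the natural direct verification, and its central identity is correct: since $v^{(i)}:=v-(v_i-w_i)e^i$ agrees with $v$ except that its $i$-th coordinate is lowered to $w_i$, one indeed has $[m,v^{(i)}]=[m,v]\cap\{x: x_i\le w_i\}$ and hence $\bigcup_{i=1}^p[m,v^{(i)}]=[m,v]\setminus(w_v,v]$, which is exactly the combinatorial content of the cut; the inclusion $\mathcal{P}'\subseteq\mathcal{Z}^{\diamond}$ also follows as you say from $w_v\in\mathcal{Z}^{+}$, conormality, and Lemma \ref{lem_bar_w}.

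The one genuine gap is the step $\mathcal{N}(V')=\mathcal{N}(V)\setminus(w_v,v]$ and the resulting identity $\mathcal{L}(V')=\mathcal{L}(V)\cup(w_v,v]$. These require $(w_v,v]$ to be disjoint from $\mathcal{N}(V\setminus\{v\})-\mathbb{R}_{+}^{p}$, and pairwise incomparability of the co-proper vertices does not give this: in $\mathbb{R}^2$ take $v=(2,1)$, $u=(1,2)$, $w_v=(0,0)$; then $(0.5,0.5)\in(w_v,v]\cap(u-\mathbb{R}_{+}^{2})$ even though $u$ and $v$ are incomparable. What is true, and what suffices for the proposition, is $\mathcal{N}(V')=\mathcal{N}(V\setminus\{v\})\cup\bigl([m,v]\setminus(w_v,v]\bigr)$, whence $\mathcal{L}(V')=\mathcal{L}(V)\cup\bigl((w_v,v]\setminus\operatorname{int}(\mathcal{N}(V\setminus\{v\})-\mathbb{R}_{+}^{p})\bigr)$; you should state the update in this weaker form. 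Relatedly, your characterization of when a new vertex $v^{(i)}$ is dominated by a retained $u$ (``$v$ and $u$ differ only in the $i$-th coordinate'') is too narrow: $u\ge v^{(i)}$ exactly when $u_j\ge v_j$ for all $j\ne i$ and $u_i\ge w_i$, which can occur with $u_i$ strictly between $w_i$ and $v_i$. Neither point overturns the statement being proved --- the set $V'$ as given is still a generating set whose conormal hull is $\mathcal{P}'$, improper members being merely redundant --- but the two displayed set identities are false as written and should be corrected as above.
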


According to Proposition \ref{prop:copolyblock}  (iii) any compact conormal set can be approximated as closely as desired by a copolyblock. As a result, a family of copolyblocks can be used to approximate the compact conormal set $\mathcal{Z}^{\diamond}$. In particular, Proposition \ref{prop:new_v} generates a nested sequence of copolyblocks that inner-approximates the outcome set $\mathcal{Z}^{\diamond}$, such that $\mathcal{P}^{0}\subset\mathcal{P}^{1}\subset\mathcal{P}^{2}\subset\dots\subset\mathcal{P}^{k}\subset\mathcal{P}^{k+1}\subset\dots\subset\mathcal{Z}^{\diamond}$, where the initial copolyblock $\mathcal{P}^{0}=[M,M]$ is constructed by calculating $M$ as in Section \ref{sec:3}.

In each iteration, the process where copolyblock $\mathcal{P}^{k+1}$
is generated from $\mathcal{P}^{k}$ and the resulting inner approximation
is described in Procedure CopolyblockCut,

\begin{algorithm}[H] 
\SetAlgorithmName{Procedure}{procedure}{List of Procedures}
\caption{\textit{CopolyblockCut}} \label{algo:next_time_step}
\KwIn{A copolyblock $V^k$ or a set of its vertices, the current considering vertex $v^k$ and the intersection $w^k$ of the ray starting from $v^k$ along direction $\hat{d}$ and $\partial\mathcal{Z^+}$}
\KwOut{The new inner approximate outcome set $V^{k+1}$}
Set $V^{k+1}\gets V^{k}\setminus\{v^{k}\}$.\\
\For{$i \gets 1$ \textbf{\textup{to}} $p$}
{$z^{i} = v^{k}-(v^{k}_{i}-w^{k}_{i})e^{i}$;\\
\If{$z^{i}_{i}\neq m_{i}$}
{$V^{k+1}\gets(V^{k+1}\cup\{z^{i}\})$.}}
\end{algorithm}

\subsection{Brand-and-bound algorithm scheme}
Utilizing the outcome space approach, the solution of \ref{prob_SBP} is attained through the enhancement of upper and lower bounds for the objective function subsequent to each iteration. Furthermore, the outcome space is approximated recurrently via the cutting cone methodology on inner copolyblocks. Commencing with copolyblock $\mathcal{P}^{0}=[M,M]$, we construct a sequence of copolyblocks ${\mathcal{P}^{k}}$ iteratively such that
$$\mathcal{P}^{0}\subset\mathcal{P}^{1}\subset\mathcal{P}^{2}\subset\dots\subset\mathcal{P}^{k}\subset\mathcal{P}^{k+1}\subset\dots\subset\mathcal{Z}^{\diamond}.$$
The subsequent notations shall be employed:
\begin{itemize}
\item The set of all co-proper vertices is denoted by $V^{k}$ which defines the copolyblock $\mathcal{P}^{k}=\mathcal{L}\left(V^{k}\right)$ .
\item The upper and lower bounds are denoted by $\alpha_{k}$ and $\beta_{k}$, respectively.
\end{itemize}
At the initial step with $k=0$, we have $V^{0}=\left\{ M\right\} ,\mathcal{P}^{0}=\left[M,M\right],\alpha_{0}=+\infty$. In a typical iteration $k$, by solving $\min\left\{ \varphi\left(v\right)\mid v\in V^{k}\right\} $
the lower bound $\beta_{k}$ is assigned, then we determine $v^{k}$
such that $\beta_{k}=\varphi\left(v^{k}\right)$. By solving $\left(P^{2}\left(v^{k}\right)\right)$,
we find a new weakly efficient point $z^{k}=f\left(x^{k}\right)$
of the lower problem, after that we find solution $y^k$ for problem \ref{prob_SBP} with the $x^k$ found, if the problem is feasible, we will obtain a solution $\left(x^{k},y^{k}\right)$.
After that we compare $h\left(x^{k},y^{k}\right)$ to
$\alpha_{k}$ to update upper bound. If $\alpha_{k}$ and $\beta_{k}$
satisfy terminate condition meaning that upper bound is close enough
to lower bound, the algorithm stop and return $\left(\left(x^{k},y^{k}\right),h\left(x^{k},y^{k}\right)\right)$.
Otherwise, a new co-proper vertices set $V^{k+1}$ is created by procedure
CopolyblockCut, before new inner approximate outcome set $\mathcal{P}^{k+1}=\mathcal{L}\left(V^{k+1}\right)$
is determined and next iteration upper bound is assigned.

\subsection{Determining the upper bounds}

A weakly nondominated point of $\mathcal{Z}^+$ can be easily determined
by the following remark.

\begin{remark}\label{remark:P_0_v}Let fix a vector $\hat{d}>0$ in
$\mathbb{R}^{n}$ and $v$ an arbitrary point in $\mathbb{R}^{p}$.
Then the intersection $w_{v}$ of the line through $v$ along direction
$\hat{d}$ can be determined by 
\begin{equation} \label{eq:bar_w}
    w_{v}=v+t_{v}\hat{d}
\end{equation} where $t_{v}$
is the optimal value of the following problem
\begin{equation}
\begin{array}{rl}
\min & t\tag*{\ensuremath{(P^{0}(v))}}\\
{\rm s.t.} & v+t\hat{d}\in\mathcal{Z}^{+},\;t\in\mathbb{R}.
\end{array}\label{eq:P_0_v}
\end{equation}
\end{remark}
Lemma \ref{lem_bar_w} shows that $w_{v}:=v+t_{v}\hat{d}$ is a weakly nondominated point
of $\mathcal{Z}^{+}$.
\begin{lemma}\cite[Lemma 2.1]{Thang2020} \label{lem_bar_w} 
    For any point $v$ in $\mathbb{R}^{p}$.
    Then there exists the unique point $w_{v}$ determined by \eqref{eq:bar_w}
    that is a weakly nondominated point of $\mathcal{Z}^{+}$.
    
\noindent 
\end{lemma}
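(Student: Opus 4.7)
The plan is to first establish that the scalar optimization problem $(P^{0}(v))$ attains a finite optimal value $t_{v}$, so that $w_{v}=v+t_{v}\hat{d}$ is well defined, and then to argue directly that $w_{v}$ cannot be strictly dominated by any point of $\mathcal{Z}^{+}$. Uniqueness of $w_{v}$ is automatic from uniqueness of the scalar minimum $t_{v}$, so no separate argument is needed.

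For existence of $t_{v}$, I would exploit that by (A1)--(A2) the set $X$ is compact and $f$ is continuous, so $\mathcal{Z}=f(X)$ is compact. Hence $\mathcal{Z}^{+}=\mathcal{Z}+\mathbb{R}_{+}^{p}$ is closed and upward-closed with respect to $\mathbb{R}_{+}^{p}$. Let $T_{v}=\{t\in\mathbb{R}\mid v+t\hat{d}\in\mathcal{Z}^{+}\}$; by continuity of $t\mapsto v+t\hat{d}$ and closedness of $\mathcal{Z}^{+}$, the set $T_{v}$ is closed. It is nonempty: choosing any $z^{0}\in\mathcal{Z}$, the fact that $\hat{d}>0$ coordinatewise makes $v+t\hat{d}\geq z^{0}$ for all sufficiently large $t$, whence $v+t\hat{d}\in\mathcal{Z}^{+}$. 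It is bounded below: since $\mathcal{Z}\subseteq[m,M]$, any $z\in\mathcal{Z}^{+}$ satisfies $z\geq m$, and the positivity of every component of $\hat{d}$ translates $v+t\hat{d}\geq m$ into a lower bound on $t$. Therefore $t_{v}=\min T_{v}$ exists, is unique, and $w_{v}$ is well defined and unique.

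For the weakly nondominated claim I would argue by contradiction. Suppose there is $z\in\mathcal{Z}^{+}$ with $z<w_{v}$ in the sense of the paper's definition, i.e. $z_{i}<(w_{v})_{i}$ for every $i$. Set $\varepsilon:=w_{v}-z$, which then has strictly positive components, and pick $\delta:=\min_{i}\varepsilon_{i}/\hat{d}_{i}>0$, so that $\delta\hat{d}\leq\varepsilon$ componentwise. Then
\[
v+(t_{v}-\delta)\hat{d}\;=\;w_{v}-\delta\hat{d}\;\geq\;w_{v}-\varepsilon\;=\;z.
\]
Upward-closedness of $\mathcal{Z}^{+}$ with respect to $\mathbb{R}_{+}^{p}$, combined with $z\in\mathcal{Z}^{+}$, yields $v+(t_{v}-\delta)\hat{d}\in\mathcal{Z}^{+}$, contradicting the minimality of $t_{v}$ since $t_{v}-\delta<t_{v}$.

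The main obstacle is not conceptual but rather in making sure the two structural properties of $\mathcal{Z}^{+}$ used above — namely closedness (for attainment of $t_{v}$) and upward-closedness under $\mathbb{R}_{+}^{p}$ (for the strict-domination contradiction) — are explicitly justified, because the excerpt introduces $\mathcal{Z}^{+}$ mainly through its convex, full-dimensional, unbounded nature and cites the lemma from prior work. Once these two properties are recorded, the rest is a short direct argument using only that $\hat{d}>0$.
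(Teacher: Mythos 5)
Your proof is correct and complete: compactness of $\mathcal{Z}=f(X)$ gives closedness of $\mathcal{Z}^{+}=\mathcal{Z}+\mathbb{R}_{+}^{p}$, the set $T_{v}$ is nonempty, closed and bounded below so $t_{v}$ is attained, and the $\delta$-perturbation argument correctly turns any strict dominator of $w_{v}$ into a smaller feasible $t$, contradicting minimality. Note that the paper itself offers no proof of this statement --- it is imported verbatim as Lemma~2.1 of the cited prior work \cite{Thang2020} --- so there is nothing internal to compare against; your argument is the standard direct one and supplies exactly the two structural facts (closedness and upward-closedness of $\mathcal{Z}^{+}$) that the paper leaves implicit.
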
.

We can rewrite \ref{eq:P_0_v} in its explicit form
\begin{align}
\min\,\, & t\tag*{\ensuremath{(P^{1}(v))}}\label{eq:P_1_v}\\
\mbox{s.t.\,\,} & f(x)-t\hat{d}-v\leq0,\nonumber \\
 & x\in X,\;t\in\mathbb{R}.\nonumber 
\end{align}
It can be seen that \ref{eq:P_1_v} is, in general, nonconvex. However,
it is equivalent to the following problem

\begin{align}
\min\,\, & \max \left\{\dfrac{f_{j}(x)-v_{j}}{\hat{d}_{j}}\ \middle\vert\ j=1,...,p \right\}\tag*{\ensuremath{(P^{2}(v))}}\label{eq:Pbarv}\\
\mbox{s.t.}\;\; & x\in X.\nonumber 
\end{align}
which is solvable as the objective function is pseudoconvex
and the feasible set is convex.
\begin{lemma}\cite[Lemma 2.2 ]{Thang2020}\label{lem_equiv} 
    It can be inferred that problems \ref{eq:P_0_v} and \ref{eq:Pbarv} are equivalent. In other words, if Problem \ref{eq:P_0_v} has an optimal solution $(x^*,t^*)$ , then $x^*$ is the optimal solution of Problem has  \ref{eq:Pbarv}. Conversely, \ref{eq:Pbarv} has an optimal solution $x^*$ with the corresponding optimal value $t^*$, then $(x^*,t^*)$ is the optimal solution of \ref{eq:P_0_v}. Additionally, it should be noted that problem \ref{eq:Pbarv} is a pseudoconvex programming problem.
\end{lemma}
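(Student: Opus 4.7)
The plan is to reduce $(P^1(v))$ to $(P^2(v))$ by explicitly minimising out the auxiliary variable $t$. Since $\hat d>0$, the vector inequality $f(x)-t\hat d-v\le 0$ is equivalent to the scalar inequalities $t\ge (f_j(x)-v_j)/\hat d_j$ for $j=1,\dots,p$, and hence to the single inequality $t\ge\Phi(x):=\max_{j=1,\dots,p}(f_j(x)-v_j)/\hat d_j$. Thus $(x,t)$ is feasible for $(P^1(v))$ if and only if $x\in X$ and $t\ge\Phi(x)$. This pivot observation I would establish first; it depends only on the strict positivity of the direction $\hat d$.

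Using this characterisation, the forward direction is immediate. If $(x^*,t^*)$ is optimal for $(P^1(v))$, minimality in $t$ forces $t^*=\Phi(x^*)$, for otherwise $(x^*,\Phi(x^*))$ would be feasible with a strictly smaller objective value. Then for any $x'\in X$ the pair $(x',\Phi(x'))$ is feasible for $(P^1(v))$, so $t^*\le\Phi(x')$, i.e.\ $\Phi(x^*)\le\Phi(x')$, showing that $x^*$ minimises $\Phi$ on $X$ with value $t^*$. The converse is entirely symmetric: given $x^*$ optimal for $(P^2(v))$ with value $t^*=\Phi(x^*)$, the pair $(x^*,t^*)$ is feasible for $(P^1(v))$, and any other feasible $(x,t)$ satisfies $t\ge\Phi(x)\ge\Phi(x^*)=t^*$.

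For the concluding assertion, I would argue that each ratio $(f_j(\cdot)-v_j)/\hat d_j$ is pseudoconvex on $X$, since scaling a pseudoconvex function by the positive constant $1/\hat d_j$ and shifting by a constant preserve pseudoconvexity in the Clarke sense stipulated in (A3). The main obstacle is the pseudoconvexity of the pointwise maximum $\Phi$: in full generality the maximum of finitely many pseudoconvex functions need not be pseudoconvex, so this step cannot be a purely formal manipulation. The plan is to exploit the Clarke regularity of the $f_j$ postulated in (A3) together with convexity of $X$ from (A1) and (A4) to adapt the reasoning of \cite{Thang2020}, thereby showing that $\Phi$ remains pseudoconvex on $X$ in the nonsmooth sense compatible with the neurodynamic method of Section~\ref{sec:neuraldynamic}. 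This last ingredient is precisely the reason the lemma is quoted from \cite{Thang2020} rather than reproved from scratch.
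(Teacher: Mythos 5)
The paper never proves this lemma itself --- it imports it verbatim from \cite[Lemma 2.2]{Thang2020} --- so there is no in-paper argument to compare yours against; I can only assess your proposal on its merits. Your elimination of the auxiliary variable is correct and is the standard (essentially the only) route: since $\hat d>0$, a pair $(x,t)$ is feasible for $(P^1(v))$ iff $x\in X$ and $t\ge\Phi(x):=\max_j (f_j(x)-v_j)/\hat d_j$, optimality then forces $t^*=\Phi(x^*)$, and the two optimal values and solution sets correspond exactly as you say. Together with the definitional unwinding $v+t\hat d\in\mathcal{Z}^{+}\iff\exists x\in X:\ f(x)-t\hat d-v\le 0$, which is how the paper passes from $(P^0(v))$ to $(P^1(v))$, this disposes of the equivalence claim completely.

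The one weak point is your final paragraph. You present the pseudoconvexity of the pointwise maximum $\Phi$ as a genuine obstacle that ``cannot be a purely formal manipulation'' and must be outsourced to \cite{Thang2020}. Under the Penot--Quang form of pseudoconvexity that this paper actually adopts (if \emph{some} $\xi\in\partial\varphi(x)$ satisfies $\xi^{\top}(\tilde x-x)\ge 0$, then $\varphi(\tilde x)\ge\varphi(x)$), the step \emph{is} a formal manipulation, using exactly the max rule already recorded in Lemma \ref{lem:2.7}. Write $\psi_j=(f_j-v_j)/\hat d_j$, each pseudoconvex and regular since $f_j$ is and $1/\hat d_j>0$. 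Any $\xi\in\partial\Phi(x)$ lies in $\operatorname{conv}\{\partial\psi_j(x)\mid j\in I(x)\}$, so $\xi=\sum_{j\in I(x)}\lambda_j\xi_j$ with $\lambda_j\ge 0$, $\sum_j\lambda_j=1$, $\xi_j\in\partial\psi_j(x)$. If $\xi^{\top}(\tilde x-x)\ge 0$, then at least one active index $j$ has $\xi_j^{\top}(\tilde x-x)\ge 0$, whence $\psi_j(\tilde x)\ge\psi_j(x)=\Phi(x)$ by pseudoconvexity of $\psi_j$, and therefore $\Phi(\tilde x)\ge\psi_j(\tilde x)\ge\Phi(x)$. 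So the ``main obstacle'' you flag dissolves in three lines; appending them makes your proof self-contained, with nothing left to borrow from \cite{Thang2020} beyond what the paper has already set up.
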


By Lemma \ref{lem_equiv}, finding a Pareto solution to Problem \ref{prob_SBP} is transformed into finding the optimal solution to a pseudoconvex programming problem. The gradient descent technique and the neurodynamic method are two ways to solve quasiconvex programming problems. Since the objective function of \ref{eq:Pbarv} is nonsmooth even though the function ${f}$ is smooth, the neurodynamic approach is used to tackle this problem.\\
For that, we calculate the subgradient of $\max \left\{\dfrac{f_{j}(x)-v_{j}}{\hat{d}_{j}}\ \middle\vert\ j=1,...,p \right\}$ with the help of Lemma \ref{lem:2.5} and Lemma \ref{lem:2.7}.

\begin{lemma} [Clarke, 1983 \cite{Clarke1983}, Chain rule] \label{lem:2.5}Let $V(x): \mathbb{R}^n \rightarrow \mathbb{R}$ be a regular function and $x(t): \mathbb{R} \rightarrow \mathbb{R}^n$ be Lipschitz near $t$ and differentiable at $t$. Then, for almost every $t \in[0,+\infty)$, it holds that $\dot{V}(x(t))=\zeta^{\mathrm{T}} \dot{x}(t)$ for all $\zeta \in \partial V(x(t))$.
\end{lemma}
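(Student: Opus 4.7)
The plan is to package standard nonsmooth analysis facts (Rademacher's theorem, the definition of $\partial V$ via $V^\circ$, and regularity) into a sandwich argument that forces $\zeta^{\mathrm{T}}\dot{x}(t)$ to coincide with $\dot{V}(x(t))$ for every $\zeta\in\partial V(x(t))$. First I would observe that since $V$ is regular (hence locally Lipschitz) and $x(\cdot)$ is Lipschitz near $t$, the composition $t\mapsto V(x(t))$ is locally Lipschitz. Rademacher's theorem then implies that $V\circ x$ is differentiable for a.e.\ $t$, and by hypothesis $x(\cdot)$ is differentiable at the same $t$ for a.e.\ $t$. Restrict attention to such a ``good'' $t$.

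At any good $t$, the Lipschitz bound on $V$ lets me replace $x(t+h)$ by its linearization $x(t)+h\dot{x}(t)$ up to an $o(h)$ error inside $V$, so
\begin{equation*}
\dot{V}(x(t))=\lim_{h\downarrow 0}\frac{V(x(t)+h\dot{x}(t))-V(x(t))}{h}=V'(x(t);\dot{x}(t)),
\end{equation*}
and symmetrically the left derivative gives $\dot{V}(x(t))=-V'(x(t);-\dot{x}(t))$ because $V\circ x$ is two-sided differentiable at $t$. Next, using the defining inequality $V^{\circ}(x(t);v)\geq \zeta^{\mathrm{T}}v$ with $v=\dot{x}(t)$ and $v=-\dot{x}(t)$ respectively, I obtain the sandwich
\begin{equation*}
-V^{\circ}(x(t);-\dot{x}(t))\;\leq\;\zeta^{\mathrm{T}}\dot{x}(t)\;\leq\;V^{\circ}(x(t);\dot{x}(t)).
\end{equation*}

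Now I invoke regularity of $V$, which gives $V^{\circ}(x(t);v)=V'(x(t);v)$ for every direction $v$. Combining this with the two representations of $\dot{V}(x(t))$ above, both outer terms of the sandwich collapse to $\dot{V}(x(t))$, forcing $\zeta^{\mathrm{T}}\dot{x}(t)=\dot{V}(x(t))$ for every $\zeta\in\partial V(x(t))$. The main obstacle I anticipate is the linearization step: justifying that the $o(h)$ perturbation of the argument of $V$ produces only an $o(h)$ perturbation of $V$'s value, which is where the local Lipschitz property of $V$ is essential and must be stated carefully. Everything else is bookkeeping with the definitions of $V^{\circ}$, $V'$, and $\partial V$ together with Rademacher's theorem.
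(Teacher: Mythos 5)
The paper states this lemma as a cited result from Clarke (1983) and gives no proof of its own, so there is nothing internal to compare against. Your argument is correct and is the standard proof of this chain rule: the Lipschitz linearization yields $\dot{V}(x(t))=V'(x(t);\dot{x}(t))=-V'(x(t);-\dot{x}(t))$ at points of two-sided differentiability of $V\circ x$ (which exist a.e.\ by Rademacher), the subgradient inequality $\zeta^{\mathrm{T}}v\leq V^{\circ}(x(t);v)$ applied to $v=\pm\dot{x}(t)$ gives the sandwich, and regularity collapses both bounds to $\dot{V}(x(t))$; you also correctly flag the only delicate point, namely that the $o(h)$ error in the argument of $V$ must be controlled by the local Lipschitz constant before passing to the limit.
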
 

\begin{lemma} [Clarke, 1983 \cite{Clarke1983}] \label{lem:2.7} Let $\left\{f_i, i=1,2,3, \ldots, n \right\}$ be a finite set of functions that are Lipschitz near $x$ and regular. Define $\psi(x)=\max \left\{f_i(x), i=1,2,3, \ldots, n\right\}$. Then the subdifferential of $\psi(x)$ is given by $\partial \psi(x)=\operatorname{conv}\left\{\partial f_i(x), i \in I(x)\right\}$ where $I(x)$ is the set of indices $i$ such that $f_i(x)=\psi(x)$ and "conv" denotes the convex hull of a set.
\end{lemma}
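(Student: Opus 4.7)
The plan is to establish the max rule in two stages: first show that $\psi$ is locally Lipschitz and regular at $x$ while computing its generalized directional derivative, and then recover $\partial \psi(x)$ from that derivative via support-function duality, which is the standard way to identify Clarke subdifferentials given by Definition 2.1.

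For the first stage, I would exploit that if $j \notin I(x)$ then $f_j(x) < \psi(x)$, so by continuity of each $f_i$ there is a neighborhood of $x$ and a threshold $\bar t > 0$ such that for every $y$ in that neighborhood and $t \in (0,\bar t]$ the maximum defining $\psi(y+tv)$ can only be attained at indices in $I(x)$. Using this together with $\psi(x) = f_i(x)$ for each $i \in I(x)$, one checks by hand that
\begin{equation*}
\psi'(x;v) \;=\; \lim_{t \downarrow 0}\frac{\psi(x+tv)-\psi(x)}{t} \;=\; \max_{i \in I(x)} f_i'(x;v),
\end{equation*}
the limit existing because each $f_i'(x;v)$ exists by regularity of $f_i$. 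An analogous limsup argument shows $\psi^\circ(x;v) \le \max_{i \in I(x)} f_i^\circ(x;v)$: for $y$ near $x$ and $t$ small, choose $j=j(y,t) \in I(x)$ attaining the max at $y+tv$, bound $\psi(y) \ge f_j(y)$, and obtain $[\psi(y+tv)-\psi(y)]/t \le [f_j(y+tv)-f_j(y)]/t$; taking limsup and using finiteness of $I(x)$ yields the claim. The reverse inequality $\psi^\circ(x;v) \ge \psi'(x;v) = \max_{i \in I(x)} f_i'(x;v) = \max_{i \in I(x)} f_i^\circ(x;v)$ is immediate from $\psi^\circ \ge \psi'$ and the regularity of each $f_i$. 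Hence $\psi^\circ(x;v) = \psi'(x;v)$, proving $\psi$ is regular, and simultaneously
\begin{equation*}
\psi^\circ(x;v) \;=\; \max_{i \in I(x)} f_i^\circ(x;v).
\end{equation*}

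For the second stage, recall that the Clarke subdifferential $\partial \varphi(x)$ is, by Definition 2.1, precisely the (unique) nonempty convex compact set whose support function in direction $v$ is $\varphi^\circ(x;v)$. Setting $C := \operatorname{conv}\{\partial f_i(x) : i \in I(x)\}$, which is convex and compact as the convex hull of a finite union of convex compact sets, its support function satisfies
\begin{equation*}
\sigma_C(v) \;=\; \max_{i \in I(x)} \sigma_{\partial f_i(x)}(v) \;=\; \max_{i \in I(x)} f_i^\circ(x;v) \;=\; \psi^\circ(x;v) \;=\; \sigma_{\partial \psi(x)}(v).
\end{equation*}
Since two convex compact sets with the same support function are equal, we conclude $\partial \psi(x) = C$, which is the formula claimed.

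The main obstacle I anticipate is controlling the quantifiers in the ``active-index stays active'' argument: one needs the neighborhood of $x$ and the time threshold to be uniform enough that the same finite index set $I(x)$ governs both the directional derivative at $x$ and the limsup over $(y,t) \to (x,0^+)$ defining $\psi^\circ$. Once this continuity-based selection is done carefully, the rest is routine algebra on support functions.
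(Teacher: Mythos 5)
The paper states this lemma purely as a citation to Clarke (1983) and supplies no proof of its own, so there is no in-paper argument to compare against; your proposal is the standard proof from that reference and is correct. Both stages are sound: the active-index localization yielding $\psi^{\circ}(x;v)=\psi^{\prime}(x;v)=\max_{i\in I(x)} f_i^{\circ}(x;v)$ (with the quantifier control you flag handled exactly as you describe, since $y+tv\to x$ forces the maximum to be attained in $I(x)$ for $(y,t)$ near $(x,0^{+})$), and the identification of $\partial\psi(x)$ with $\operatorname{conv}\left\{\partial f_i(x): i\in I(x)\right\}$ via support-function duality, using that this convex hull of a finite union of compact convex sets is itself compact and convex.
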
 
The dynamic system of the problem can then be formulated analogously to the problem in Section \ref{sec:neuraldynamic} by applying the theorems directly to \ref{eq:Pbarv} as follows,
\begin{align*}
    \partial \max \left\{\dfrac{f_{j}(x)-v_{j}}{\hat{d}_{j}}\ \middle\vert\ j=1,...,p \right\} 
    &= \text{conv} \left\{\partial \dfrac{f_{j}(x)-v_{j}}{\hat{d}_{j}} \ \middle\vert\ j\in I(x) \right\} \\
    & = \dfrac{1}{\hat{d_j}}\text{conv} \left\{\partial f_j(x(t))^\mathrm{T}\dot{x}(t) \right\}.
\end{align*}
We then attain the dynamic model for solving \ref{eq:Pbarv},
\begin{equation} \tag{\ensuremath{ND_{}P^2(v)}}
    \begin{cases}
        x(0) &\in X; \nonumber\\ 
        \dfrac{d}{d t} x(t) &\in-c(x(t)) \times \dfrac{1}{\hat{d_j}}\text{conv} \left\{\partial f_j(x(t))^\mathrm{T}\dot{x}(t) \right\}-\partial S_m(x(t)),
    \end{cases}
\end{equation}
where $c(x(t)$, $\Psi$ and $S_m(x(t))$ is defined in \eqref{eq: c(x)}, \eqref{eq: Psi} and \eqref{eq: S_m(x)}, respectively.
\subsection{Determining the lower bounds}
Since $h(x,y)$ is a pseudoconvex function, the subproblem of finding

\begin{equation} \tag{\ensuremath{LB(V^k)}} \label{prob_LB}
\beta_{k}=\min\left\{ \varphi(v)\mid v\in V^{k}\right\} ,\\
\end{equation}

in the proposed algorithm requires solving \eqref{eq: MP(z)} where $f_{i}$ are quasiconvex functions (by pseudoconvexity) and $\mathcal{G}$ is a nonempty
compact convex set with the quasiconvexity of $g$ in it. These problems can be handled by the neurodynamic model mentioned in Section \ref{sec:neuraldynamic}. We first define function $\tilde{G}(u)$ as,
\begin{equation}
    \tilde{G}(u) = \left(s_1(x),\dots,s_m(x),-y, f_1(x)-z_1, \dots, f_p(x) - z_p, g_1(x,y),\dots,g_l(x,y) \right)^T,
\end{equation}
where $u = (x, y)^T \in \mathbb{R}^{n+1}$ and $u \in \tilde{\mathcal{G}} := \mathcal{G} \cup \{x \in X \mid f(x) \leq z \}$.
Analogously to Section \ref{sec:neuraldynamic}, we then introduce a function $G_m$,
\begin{equation}
    \tilde{G_m}(u) = \sum_{i = 1}^{m + p+ l + 1} \max \{0, \tilde{G_i}(u) \}.
\end{equation}
We can calculate subgradient of $\tilde{G_m}(u)$ as
$$
\partial \tilde{G_m}(u)= \begin{cases}0, & u \in \operatorname{int}\left(\tilde{\mathcal{G}}\right); \\ \sum_{i \in I_0(u)}[0,1] \partial \tilde{G_i}(u), & u \in \operatorname{bd}\left(\tilde{\mathcal{G}}\right); \\ \sum_{i \in I_{+}(u)} \partial s_i(u)+\sum_{i \in I_0(u)}[0,1] \partial \tilde{G_i}(u), & u \notin \tilde{\mathcal{G}},\end{cases}
$$
where $I_{+}(u)=\left\{i \in\{1,2, \ldots, m\}: \tilde{G_i}(u)>0\right\}$, $I_0(u)=\left\{i \in\{1,2, \ldots, m\}: \tilde{G_i}(u)=0\right\}$, $\operatorname{int}\left(\tilde{\mathcal{G}}\right)$ and $\operatorname{bd}\left(\tilde{\mathcal{G}}\right)$ denote the interior and boundary of the feasible set $\tilde{\mathcal{G}}$, respectively.

The neurodynamic model for solving Problem \ref{eq: MP(z)} are constructed as,
\begin{equation} \tag{\ensuremath{NDMP(z)}}
    \label{SolveMP(z)}
    \begin{cases}
        u(0) &\in \tilde{\mathcal{G}}; \\
        \dfrac{d}{d t} u(t) &\in-c(u(t)) \partial h(u)-\partial \tilde{G_m}(u(t)),
    \end{cases}
\end{equation}
where
\begin{equation} 
c(u(t))=\left\{\prod_{i=1}^{m} c_i(t) \mid c_i(t) \in 1-\Psi\left(\tilde{G_i}(u(t))\right), i=1,2, \ldots, m\right\},
\end{equation}
where $\Psi$ is defined in \eqref{eq: Psi}.
\subsection{The description of the proposed algorithm}

We aim to find the approximate solutions to \ref{prob_SBP} and \ref{prob_OP}.
Let a small tolerance $\varepsilon>0$, a point $z^{*}\in{\rm WMin}\mathcal{Z}$
is called an $\varepsilon-$optimal solution to Problem \ref{prob_OP}
if there exists an upper bound $\alpha^{*}$ for Problem \ref{prob_OP}
such that $\alpha^{*}-\varphi(z^{*})<\varepsilon(1+\vert\varphi(z^{*})\vert)$.
Any $x^{*}\in X_{WE}$ satisfying $f(x^{*})\le z^{*}$ is then called
an approximate optimal solution to Problem \ref{prob_SBP}.


\begin{algorithm}[H]
\SetAlgorithmName{Algorithm}{procedure}{List of Procedures}
\renewcommand{\thealgocf}{}
\let\oldnl\nl
\newcommand{\nonl}{\renewcommand{\nl}{\let\nl\oldnl}}
\caption{\textit{Solve} \ref{prob_SBP}}
\KwIn{A pseudoconvex problem in form of \ref{prob_SBP}}
\KwOut{The approximate optimal solution with arbitrary error threshold}
Choose a sufficient small tolerance level $\varepsilon > 0$.
Solve problems \ref{Pm_i} and \ref{PM_i}, $i=1,\dots,p$ to determine
the box $[m,M]$.

Set $\mathcal{P}^{0}\gets[M,M]$, $V^{0}\gets\{M\}$ and choose a direction $\hat{d}\in\mathbb{R}^{p}_{+}$
(e.g., $\hat{d}=e$).

\For{$i \gets 1$ \textbf{\textup{to}} $p$}{
    $z^{i} = M-(M_{i}-m_{i})e^{i}$, solve \ref{SolveMP(z)} with $z = z^i$;\\
    \eIf{(MP($z^i$)) has a solution $(x^{i},y^{i})$}
    {$\alpha^{i}=h(x^{i},y^{i})$.}
    {$\alpha^{i}=\infty$.}
}

Set initial upper bound $\alpha_{0}\gets \min\{\alpha^{i}\mid i=1,2,\dots,p\}$, $k\gets0$ and boolean $update\gets False$.\\
Set current best solution $x^{*}$ corresponding to problem $\varphi(z^{i})$ which satisfies $\varphi(z^{i})=\alpha_{0}$.\\
\For {$k \gets 1$ \textup{\textbf{to}} $\infty$ }{
\SetAlgoLined
    \ForEach {$v\in V^{k}$}{
    Solve \eqref{SolveMP(z)} with $z = v$;\\
    \eIf{(\textit{MP(v)}) has a solution $(x^{*}, y^{*})$}
    {\eIf{$\nexists i:f_{i}(x^{*})=m_{i}$}
    {$\beta_{v}\gets \varphi(v)$.}{$V^{k} \gets V^{k}\setminus \{v\}$.}}
    {$V^{k} \gets V^{k}\setminus \{v\}$.}
    }

    Solve \eqref{prob_LB} to get a new lower bound $\beta_k$ 
    and $v^{k}\in V^{k}$ such that $\varphi(v^{k})=\beta_{k}$;
    
    Solve problem $(P^{2}(v^{k}))$ to find an optimal solution $(x^{k},t_{k})$ and set $w^{k} \gets v^{k}+t_{k}\hat{d}; z^{k}\gets f(x^{k})$;\\
    
    \If{$w^{k}=z^{k}$ \textbf{\textup{and}} $w^{k}_{i}>m_{i},\forall i$}
    {Find a feasible $y^{k}$ satisfying $(x^{k},y^{k})\in \mathcal{G}$;\\
    \eIf{$y^{k}$ is found}{$update \gets True$.}{$update \gets False$.}
    }

    \If{\textit{update} \textbf{\textup{and}} $h(x^{k},y^{k})<\alpha_{k}$}{Update the upper bound $\alpha_{k} \gets h(x^{k},y^{k}),\ x^{*}  \gets x^{k},\  y^{*}  \gets y^{k}$.}

    \uIf{$\alpha_{k}-\beta_{k}\leq\varepsilon (1+|\beta_k|)$}{\textbf{Terminate.}}
    \Else{Determine the new set $V^{k+1}$ by using Procedure \ref{algo:next_time_step}.
    }
    
    Determine the new inner approximate outcome set $\mathcal{P}^{k+1} \gets \mathcal{L}(V^{k+1})$;
    
    $\alpha_{k+1} \gets \alpha_{k}$.

}
\end{algorithm}

\bigskip

\section{The convergence of the proposed algorithm}
The convergence of the proposed algorithm, when $k$ is sufficiently large, is proven through the following lemmas.

\begin{lemma}\label{lem-max_norm}The number k tends to infinity
and
\[
\lim_{k\rightarrow\infty}\max_{v\in V^{k}}\left\Vert w_{v}-v\right\Vert =0,
\]
where $V^{k}$ denotes the set of all proper vertices that determine $\mathcal{P}^{k}$ and $w_{v}$ denotes the corresponding weakly nondominated point of $\mathcal{Z}^{+}$ obtained by solving Problem \ref{eq:Pbarv}.

\end{lemma}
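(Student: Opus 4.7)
The plan is to argue by contradiction. Suppose the conclusion fails: there exist $\delta > 0$, an infinite sequence of iteration indices $\{k_j\}$, and proper vertices $v^{k_j}\in V^{k_j}$ with $\|w^{k_j}-v^{k_j}\|\ge\delta$, where $w^{k_j}:=w_{v^{k_j}}$. That $k\to\infty$ itself is implicit in the assumption that the stopping criterion $\alpha_k-\beta_k\le\varepsilon(1+|\beta_k|)$ never triggers; otherwise the claim is vacuous.

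\emph{Step 1 (Limit extraction and continuity).} Since every $V^{k_j}\subset[m,M]$ and $[m,M]$ is compact, I extract a sub-subsequence (not relabelled) so that $v^{k_j}\to v^{*}\in[m,M]$. Next, I show that the map $v\mapsto w_v$ is continuous on $\mathcal{Z}^{+}$. Indeed, $\mathcal{Z}^{+}=\mathcal{Z}+\mathbb{R}_{+}^{p}$ is closed and convex, so the feasible set of the convex program $(P^{0}(v))$ depends continuously (in Hausdorff sense) on $v$, and Berge's maximum theorem gives continuity of $t_v$, hence of $w_v=v+t_v\hat{d}$. Therefore $w^{k_j}\to w^{*}:=w_{v^{*}}$ with $\|w^{*}-v^{*}\|\ge\delta>0$. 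In particular $t_{v^{*}}<0$, so $v^{*}$ lies strictly inside $\mathcal{Z}^{+}$ along direction $-\hat{d}$; excluding the trivial case $v^{*}=M$ (which is disposed of after the first cut), I may assume $v^{*}\in\operatorname{int}(\mathcal{Z}^{\diamond})$.

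\emph{Step 2 (Exhaustion of $\mathcal{Z}^{\diamond}$).} The nested family $\mathcal{P}^0\subset\mathcal{P}^1\subset\cdots\subset\mathcal{Z}^{\diamond}$ has a monotone limit $\mathcal{P}^{\infty}:=\bigcup_k\mathcal{P}^k\subseteq\mathcal{Z}^{\diamond}$. Using Proposition~\ref{prop:new_v}, each child $z^{i}=v^{k}-(v^{k}_i-w^{k}_i)e^{i}$ produced by CopolyblockCut has its $i$-th coordinate equal to $(w^{k})_i$, which lies on $\partial\mathcal{Z}^{+}$. Tracing a descendant lineage, at most $p$ successive cuts pin all $p$ coordinates to $\partial\mathcal{Z}^{+}$, so the gap along such a lineage vanishes. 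Combining this with a monotone-volume argument on the increasing family of copolyblocks inside the finite-volume box $[m,M]$ yields that every interior point of $\mathcal{Z}^{\diamond}$ eventually lies in some $\mathcal{P}^{k}$.

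\emph{Step 3 (Contradiction).} Since $v^{*}\in\operatorname{int}(\mathcal{Z}^{\diamond})$, Step~2 furnishes $K$ with $v^{*}\in\operatorname{int}(\mathcal{P}^{K})$; hence $v^{k_j}\in\operatorname{int}(\mathcal{P}^{k_j})$ for all sufficiently large $j$. But each $v^{k_j}$ is a proper (co-)vertex of $\mathcal{P}^{k_j}$ and therefore lies on $\operatorname{bd}(\mathcal{P}^{k_j})$, a contradiction.

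The main obstacle will be Step~2: because the algorithm processes only the $\varphi$-minimizing vertex at each iteration, an unselected vertex with a persistently large gap could in principle survive forever. To close this I would either (i) couple the lineage argument with the fact that $\varphi$ is decreasing, so each newly-created child has a strictly larger $\varphi$-value and the selection rule is eventually forced to descend into every lineage; or (ii) use a direct volume-increment argument: $\operatorname{vol}(\mathcal{P}^{k+1})-\operatorname{vol}(\mathcal{P}^{k})$ is bounded below by a constant multiple of $\prod_{i=1}^{p}(v^{k}_i-w^{k}_i)$ at the cut vertex, and summability against the finite total volume of $[m,M]$ forces the gap at the selected vertex to zero; a further subsequence extraction then upgrades this to the maximum over all proper vertices of $V^{k}$, completing the contradiction.
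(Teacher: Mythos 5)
Your primary argument (Steps 1--3) has a genuine gap at Step 2, and the step is not a technicality: the claim that ``every interior point of $\mathcal{Z}^{\diamond}$ eventually lies in some $\mathcal{P}^{k}$'' is essentially equivalent to the lemma itself (if the gaps $\left\Vert w_{v}-v\right\Vert$ did not shrink, the nested copolyblocks need not exhaust the interior), so invoking it is circular. The lineage argument offered in its support is also incorrect: when a vertex $v$ is cut, the child $z^{i}=v-(v_{i}-w_{v,i})e^{i}$ merely has its $i$-th coordinate equal to the $i$-th coordinate of $w_{v}$; this does not place $z^{i}$ on $\partial\mathcal{Z}^{+}$, and since $w_{z^{i}}$ is recomputed afresh at the next cut, coordinates are not cumulatively ``pinned'' after $p$ steps. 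Step 3 is also shaky, because co-proper vertices generically sit on the upper boundary of the box (the children of $M$ keep $p-1$ coordinates equal to $M_{j}$), so the limit $v^{*}$ need not lie in $\operatorname{int}(\mathcal{Z}^{\diamond})$, and the interior-versus-boundary contradiction would need to be made in $\mathcal{P}^{k_{j}}$ with more care. The compactness/Berge machinery of Step 1 is therefore scaffolding around a hole.

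The actual proof in the paper is precisely your fallback option (ii), stated directly and without contradiction: with $w_{v^{k}}=v^{k}+t_{k}\hat{d}$ one has $\mathrm{Vol}\left[w_{v^{k}},v^{k}\right]=(t_{k})^{p}\,\mathrm{Vol}\left[0,\hat{d}\right]$, the cut gives $\mathrm{Vol}\,\mathcal{P}^{k+1}-\mathrm{Vol}\,\mathcal{P}^{k}\ge(t_{k})^{p}\,\mathrm{Vol}\left[0,\hat{d}\right]$, and telescoping against the finite volume of $\mathcal{Z}^{\diamond}$ forces $\sum_{k}(t_{k})^{p}<\infty$, hence $t_{k}\to0$ and $\left\Vert w_{v^{k}}-v^{k}\right\Vert=t_{k}\Vert\hat{d}\Vert\to0$. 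So the one idea that actually closes the argument is the one you relegated to a closing remark; had you led with it, the proof would be three lines. Your worry about the selection rule is, however, well taken: the algorithm cuts the $\varphi$-minimizing vertex, whereas the volume argument controls only the gap at the vertex that is cut; the paper passes from this to $\max_{v\in V^{k}}\left\Vert w_{v}-v\right\Vert$ by simply taking $v^{k}$ to be the max-gap vertex in the proof, a mismatch with the algorithm that your proposal correctly identifies but neither you nor the paper fully resolves.
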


\begin{proof} Consider a vertex $v^{k}\in\mathcal{P}^{k}$ chosen at the $k^{th}$
iteration and the optimal value $t_{k}$ of $\left(P^{2}\left(v^{k}\right)\right)$.
As in \eqref{eq:bar_w}, let $w_{v^{k}}=v^{k}+t_{k}\hat{d}$, we have
\begin{equation}
{\rm {\rm Vol}}\left[v^{k},w_{v}^{k}\right]=\left(t_{k}\right)^{p}{\rm Vol}\left[0,\hat{d}\right].\label{eq:Vol_d}
\end{equation}

The lemma stays valid if $\max_{v\in V^{k}}\left\Vert w_{v}-v\right\Vert =0$
at some $k\ge0$. Otherwise, there exists $v^{k}\in V^{k}$ such that
$\left\Vert w_{v^{k}}-v^{k}\right\Vert =\max_{v\in V^{k}}\left\Vert w_{v}-v\right\Vert >0$.
We also have $\mathcal{P}^{k}\subseteq\mathcal{P}^{k+1}\setminus\left(v^{k}-{\rm int}\mathbb{R}_{+}^{p}\right)$,
since $\left[v^{k},w_{v^{k}}\right]\subseteq\mathcal{P}^{k}$ deduced from the definition of $w_{v^{k}}$, the volume of $\mathcal{P}^{k}$
satisfies
\begin{equation}
{\rm Vol}\mathcal{P}^{k+1}-{\rm Vol}\mathcal{P}^{k}\ge{\rm Vol}\left[v^{k},w_{v}^{k}\right].\label{eq:Vol_P^k}
\end{equation}

Combining \eqref{eq:Vol_d} with \eqref{eq:Vol_P^k}, we obtain
\[
{\rm Vol}\mathcal{P}^{k+1}-{\rm Vol}\mathcal{P}^{k}\ge\left(t_{k}\right)^{p}{\rm Vol}\left[0,\hat{d}\right].
\]

Therefore,
\[
\sum_{i=0}^{k}\left({\rm Vol}\mathcal{P}^{i+1}-{\rm Vol}\mathcal{P}^{i}\right)\ge\left(\sum_{i=0}^{k}\left(t_{i}\right)^{p}\right){\rm Vol}\left[0,\hat{d}\right].
\]

We deduce
\[
{\rm Vol}\mathcal{Z}^{\diamond}\ge{\rm Vol}\mathcal{P}^{k+1}\ge{\rm Vol}\mathcal{P}^{k+1}-{\rm Vol}\mathcal{P}^{0}\ge\left(\sum_{i=0}^{k}\left(t_{i}\right)^{p}\right){\rm Vol}\left[0,\hat{d}\right],
\]
for all $k\ge1$. As $k$ approaches infinity, the positive series $\sum_{i=0}^{k}\left(t_{i}\right)^{p}$ is upper bounded by ${\rm Vol}\mathcal{Z}^{\diamond}/{\rm Vol}\left[0,\hat{d}\right]$, which implies its convergence and $\lim_{i\rightarrow\infty}t_{i}=0$. Since $\hat{d}$ is bounded, for any $i\ge1$, we have
\[
\lim_{i\rightarrow\infty}\max_{v\in V^{i}}\left\Vert w_{v}-v\right\Vert =\lim_{i\rightarrow\infty}\left\Vert w_{v^{i}}-v^{i}\right\Vert =\lim_{i\rightarrow\infty}t_{i}\left\Vert \hat{d}\right\Vert =0.
\]

\end{proof}

\begin{lemma}\label{lem-sol_at_border}When the solution $(\bar{x},\bar{y})$
of Problem \ref{prob_SBP} satisfies $\exists i:f_{i}\left(\bar{x}\right)=m_{i}$,
then $\bar{x}$ can be obtained by solving Problem $\varphi\left(\bar{z}\right)$
with $\bar{z}$ is the solution of the following problem
\[
\min\left\{ \varphi\left(z\right)\mid z\in\left\{ M-\left(M_{j}-m_{j}\right)e^{j}\mid j=1,2,\dots,p\right\} \right\} .
\]

\end{lemma}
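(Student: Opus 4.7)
My plan is to exploit the observation that if $\bar{x}$ attains $f_i(\bar{x}) = m_i$ for some $i$, then the constraint $f(x) \le z^i$ with $z^i := M - (M_i - m_i)e^i$ essentially \emph{forces} the $i$-th coordinate to equal its global minimum, so $MP(z^i)$ sees only weakly efficient solutions of the lower problem. This should squeeze $\varphi(z^i)$ from both sides to equal the BP optimum $h(\bar{x},\bar{y})$.

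First I would verify the easy direction: since $z^i_i = m_i = f_i(\bar{x})$ and $z^i_j = M_j \ge f_j(\bar{x})$ for $j \ne i$ by definition of $M_j$, we get $f(\bar{x}) \le z^i$, so $(\bar{x},\bar{y})$ is feasible for $MP(z^i)$, yielding
\[
\varphi(z^i) \;\le\; h(\bar{x},\bar{y}).
\]

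Next, the crucial step in the other direction. Let $(x^*,y^*)$ be any optimal solution of $MP(z^i)$. Its feasibility requires $f_i(x^*) \le z^i_i = m_i$, but by definition of $m_i$ (the global minimum of $f_i$ on $X$) we must have $f_i(x^*) = m_i$. This immediately implies $x^* \in X_{WE}$: indeed, if some $x' \in X$ satisfied $f(x') < f(x^*)$ componentwise, then in particular $f_i(x') < m_i$, contradicting the definition of $m_i$. Hence $(x^*,y^*)$ is feasible for \ref{prob_SBP}, so $h(x^*,y^*) \ge h(\bar{x},\bar{y})$, i.e.\ $\varphi(z^i) \ge h(\bar{x},\bar{y})$. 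Combined with the previous step, $\varphi(z^i) = h(\bar{x},\bar{y})$.

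Finally, I would close the argument by noting that $\bar{z}$ minimizes $\varphi$ over the finite candidate set $\{M - (M_j - m_j)e^j : j=1,\dots,p\}$, so $\varphi(\bar{z}) \le \varphi(z^i) = h(\bar{x},\bar{y})$. Repeating the weak-efficiency argument above for every $z^j$ in the candidate set (the same reasoning works verbatim: feasibility of $MP(z^j)$ forces $f_j = m_j$, hence the $x$-component is weakly efficient, hence feasible for BP) gives $\varphi(z^j) \ge h(\bar{x},\bar{y})$ for all $j$, and therefore $\varphi(\bar{z}) = h(\bar{x},\bar{y})$. Any optimizer of $MP(\bar{z})$ is then a feasible solution of \ref{prob_SBP} attaining the optimal value, so its $x$-component can serve as $\bar{x}$. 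The main subtlety, and the only step that requires genuine care, is the claim that $f_j(x)=m_j$ implies $x \in X_{WE}$ — this is the linchpin that turns a pure relaxation $MP(z^j)$ into an equivalent reformulation of BP on this particular axis-aligned slice.
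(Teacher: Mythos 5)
Your proof is correct and follows the same two-sided squeeze as the paper: feasibility of $(\bar{x},\bar{y})$ for $MP(z^i)$ gives $\varphi(z^i)\le h(\bar{x},\bar{y})$, and the reverse inequality then forces equality. Where you add something is precisely in that reverse inequality: the paper simply asserts $h(\bar{x},\bar{y})\le\varphi(z^j)$ for every $j$ in the candidate set as an immediate consequence of the optimality of $(\bar{x},\bar{y})$ for (BP), whereas you justify it by noting that any point feasible for $MP(z^j)$ must satisfy $f_j(x)=m_j$ and is therefore weakly efficient, hence feasible for (BP). That step is genuinely needed --- $MP(z)$ drops the efficiency constraint, so for a general $z$ its optimal value can lie strictly below the (BP) optimum, and it is only the choice $z^j_j=m_j$ that restores feasibility for (BP); your write-up makes this explicit where the paper leaves it implicit.
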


\begin{proof}Since $(\bar{x},\bar{y})$ is the solution to the problem \ref{prob_SBP}, we have
\begin{equation}
h\left(\bar{x},\bar{y}\right)\leq\varphi\left(z\right),\quad\forall z\in\left\{ M-\left(M_{j}-m_{j}\right)e^{j}\mid j=1,2,\dots,p\right\} .\label{eq:h_phi}
\end{equation}

Consider $z^{i}=M-\left(M_{i}-m_{i}\right)e^{i}$, because $f_{i}\left(\bar{x}\right)=m_{i}$
and $f\left(\bar{x}\right)\leq M$, we have $f\left(\bar{x}\right)\leq z^{i}$
and therefore $\bar{x},\bar{y}$ is a feasible solution of the problem
\[
\varphi\left(z^{i}\right)=\min\{h(x,y)\mid (x,y)\in \mathcal{G},f(x)\leq z^{i}\},
\]
thus
\begin{equation}
\varphi\left(z^{i}\right)\leq h\left(\bar{x},\bar{y}\right).\label{eq:phi_h}
\end{equation}

From \eqref{eq:h_phi} and \eqref{eq:phi_h}, we must have
\[
h\left(\bar{x},\bar{y}\right)=\varphi\left(z^{i}\right).
\]

Because of that, $(\bar{x},\bar{y})$ can be obtained by selecting best solution
after solving $p$ problems $MP(z^j),\ j=1,2,\dots,p$.\end{proof}

\begin{lemma}\label{lem-sol_on_MinZ}For any $z\in\mathcal{Z}^{\diamond}$
such that \ref{eq: MP(z)} has a solution, then if we continuously
solve $\left(P^{2}\left(v^{k}\right)\right)$ with initial $v^{0}\equiv z$
to obtain $p$ new $v^{k}-(v_{i}^{k}-w_{i}^{k})e^{i}$ points adding
to $V^{k+1}$ and remove any $v\in V^{k+1}$ from $V^{k+1}$ if \textit{MP($v$)}
has no solution, two following statements will be true,
\begin{enumerate}
    \item[(i)] If $\nexists(\bar{x},\bar{y})\in \mathcal{G}$ such that $f\left(\bar{x}\right)\in{\rm Min}\mathcal{Z}\cap\left(z-\mathbb{R}_{+}^{p}\right)$, we will obtain $V^{k}=\emptyset$ when $k$ tends to infinity.
    \item[(ii)] If $\exists(\bar{x},\bar{y})\in \mathcal{G}$ such that $f\left(\bar{x}\right)\in{\rm Min}\mathcal{Z}\cap\left(z-\mathbb{R}_{+}^{p}\right)$,
then we can extract a sequence $\left\{ u^{k}\right\} _{k=0}^{\infty},u^{k}\in V^{k}$
with $u^{0}\equiv z$ such that if $w_{u^{k}}$ is the weakly nondominated
point of $\mathcal{Z}^{+}$ induced by the solution $\left(x^{k},t_{k}\right)$
of problem $(P^{2}(u^{k}))$, when $k$ tends to infinity
$$
\lim_{k\rightarrow\infty}\left\Vert w_{u^{k}}-f\left(\bar{x}\right)\right\Vert =0.
$$
\end{enumerate}
\end{lemma}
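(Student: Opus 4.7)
The proof hinges on Lemma \ref{lem-max_norm} (which provides $\max_{v\in V^k}\|w_v - v\|\to 0$), the compactness of $[m,M]$, closedness of $\mathcal{G}$ and $\mathcal{Z}^+$, and the Pareto structure ${\rm Min}\mathcal{Z}^+ = {\rm Min}\mathcal{Z}$ from Proposition \ref{lem_MinY}. A preliminary induction on the cutting process, using $t_k\leq 0$ whenever the vertex being split lies in $\mathcal{Z}^+$, shows that the vertices generated stay within $z-\mathbb{R}_+^p$; in case (ii) this is automatic because the constructed sequence will satisfy $u^k\geq\bar{z}$.

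For part (i), I argue by contradiction. Assume $V^k\neq\emptyset$ for all $k$ and pick $u^k\in V^k$; since $u^k$ survived the removal step, $MP(u^k)$ has a solution $(\tilde{x}^k,\tilde{y}^k)\in\mathcal{G}$ with $f(\tilde{x}^k)\leq u^k$, and the $P^2$ solution $(x^k,t_k)$ gives $f(x^k)\leq w_{u^k}=u^k+t_k\hat{d}$ with equality in at least one active coordinate. Extract convergent subsequences in $[m,M]$, $X$, and $\mathcal{G}$ (using closedness of $\mathcal{G}$) to obtain $u^{k_j}\to u^*$, $x^{k_j}\to x^*$, and $(\tilde{x}^{k_j},\tilde{y}^{k_j})\to(\tilde{x}^*,\tilde{y}^*)\in\mathcal{G}$; by Lemma \ref{lem-max_norm}, $w_{u^{k_j}}\to u^*\in{\rm WMin}\mathcal{Z}^+$, $f(x^*)\leq u^*$, and $f_i(x^*)=u^*_i$ for some $i$ (active-coordinate limit). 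A Pareto refinement argument then produces some $(\bar{x},\bar{y})\in\mathcal{G}$ with $f(\bar{x})\in{\rm Min}\mathcal{Z}\cap(z-\mathbb{R}_+^p)$, contradicting the hypothesis of (i).

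For part (ii), I construct $u^k$ inductively preserving the invariant $u^k\geq\bar{z}$ componentwise. Set $u^0:=z$. If $u^k$ remains in $V^{k+1}$ (not cut, and not removed because $(\bar{x},\bar{y})\in\mathcal{G}$ with $f(\bar{x})=\bar{z}\leq u^k$ keeps $MP(u^k)$ feasible), take $u^{k+1}:=u^k$. If $u^k$ is the cut vertex, solve $P^2(u^k)$ for $w^k=u^k+t_k\hat{d}$; feasibility of $\bar{x}$ in $P^2(u^k)$ with objective $\max_j(\bar{z}_j-u^k_j)/\hat{d}_j\leq 0$ forces $t_k\leq 0$, hence $w^k\leq u^k$. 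The key observation is that $\bar{z}\in{\rm Min}\mathcal{Z}^+$ forbids $w^k\leq\bar{z}$ with $w^k\neq\bar{z}$, so some coordinate $i^{*}$ satisfies $w^k_{i^{*}}\geq\bar{z}_{i^{*}}$; choose such $i^{*}$, favoring coordinates with $u^k_{i^{*}}>\bar{z}_{i^{*}}$ strict, and set $u^{k+1}:=u^k-(u^k_{i^{*}}-w^k_{i^{*}})e^{i^{*}}$, so that $u^{k+1}\geq\bar{z}$. The resulting sequence $(u^k)$ is componentwise non-increasing and bounded below by $\bar{z}$, hence $u^k\to u^*\geq\bar{z}$; Lemma \ref{lem-max_norm} yields $w_{u^k}\to u^*\in{\rm WMin}\mathcal{Z}^+$. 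The main obstacle is upgrading $u^*\geq\bar{z}$ to $u^*=\bar{z}$: the refined choice rule eventually targets every slack coordinate $u^k_j-\bar{z}_j>0$, $t_k\to 0$ (by Lemma \ref{lem-max_norm}) shrinks the per-cut residual, and the minimality of $\bar{z}$ combined with the structure of ${\rm WMin}\mathcal{Z}^+$ rules out a limit $u^*$ strictly above $\bar{z}$ in any coordinate.
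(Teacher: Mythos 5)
Your part (ii) follows the paper's own argument almost step for step: the same invariant $u^{k}\geq f(\bar{x})$, established by the same observation that $w^{k}$ cannot dominate $f(\bar{x})$ because $f(\bar{x})$ is efficient, the same monotone-convergence step, and the same concluding triangle inequality with Lemma \ref{lem-max_norm}. But you explicitly leave the decisive step open: you name ``upgrading $u^{*}\geq f(\bar{x})$ to $u^{*}=f(\bar{x})$'' as the main obstacle and then only gesture at it. This is exactly the point that needs a real argument. Knowing $u^{*}\in{\rm WMin}\mathcal{Z}^{+}$, $u^{*}\geq f(\bar{x})$ and $f(\bar{x})\in{\rm Min}\mathcal{Z}$ does \emph{not} force $u^{*}=f(\bar{x})$: a weakly nondominated point can satisfy $u^{*}\geq f(\bar{x})$, $u^{*}\neq f(\bar{x})$ without being strictly larger in every coordinate. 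Moreover, since $t_{k}\to 0$, the per-iteration reduction of each slack coordinate vanishes, so ``the choice rule eventually targets every slack coordinate'' does not by itself eliminate the slack. The paper closes this step (tersely) by asserting the limit lies in ${\rm Min}\mathcal{Z}\cap(z-\mathbb{R}_{+}^{p})$ and invoking minimality; you need either that claim, justified, or a quantitative version of your selection rule.

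For part (i) you take a genuinely different route. The paper separates $A={\rm Min}\mathcal{Z}\cap(z-\mathbb{R}_{+}^{p})$ from $B=(z-\mathbb{R}_{+}^{p})\cap\{f(x)\mid(x,y)\in\mathcal{G}\}$ by a positive distance and combines this with Lemma \ref{lem-max_norm} to conclude that every $MP(v)$, $v\in V^{k}$, eventually becomes infeasible. You instead extract convergent subsequences and appeal to a ``Pareto refinement argument'' to contradict the nonexistence hypothesis. That refinement is where your proof breaks: from a limit pair $(\tilde{x}^{*},\tilde{y}^{*})\in\mathcal{G}$ with $f(\tilde{x}^{*})\leq u^{*}\leq z$ you can indeed find $\bar{x}\in X$ with $f(\bar{x})\leq f(\tilde{x}^{*})$ and $f(\bar{x})\in{\rm Min}\mathcal{Z}$, but the hypothesis of (i) that you must contradict concerns pairs $(\bar{x},\bar{y})\in\mathcal{G}$, and replacing $\tilde{x}^{*}$ by the efficient point $\bar{x}$ destroys the guarantee that any $\bar{y}$ with $g(\bar{x},\bar{y})\leq 0$, $\bar{y}\in\mathbb{R}_{+}^{m}$ exists. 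As written, part (i) therefore does not reach the required contradiction; the paper's distance argument sidesteps this because it never modifies the $x$-component of a feasible pair.
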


\begin{proof}i) Since $\nexists(\bar{x},\bar{y})\in \mathcal{G}$ such that $f\left(\bar{x}\right)\in{\rm Min}\mathcal{Z}\cap\left(z-\mathbb{R}_{+}^{p}\right)$,
by denoting
\begin{eqnarray*}
A & = & {\rm Min}\mathcal{Z}\cap\left(z-\mathbb{R}_{+}^{p}\right),\\
B & = & \left(z-\mathbb{R}_{+}^{p}\right)\cap\left\{ f\left(x\right)\mid (x,y)\in \mathcal{G}\right\} ,
\end{eqnarray*}
we have
\[
A\cap B=\emptyset.
\]

Therefore, we can assume that the infimum distance between these two
set which is Hausdorff distance is a number $\varepsilon>0$.

Also, because of Lemma \ref{lem-max_norm}, we can chose a number
$k$ such that
\[
\lim_{k\rightarrow\infty}\max_{v\in V^{k}}\left\Vert w_{v}-v\right\Vert <\varepsilon.
\]

Thus, since $w_{v}\in A,\forall v\in V^{k}$, it does not exist any
$v\in V^{k}$ such that $\exists b\in B:b\leq v$ due to the Hausdorff
distance $\varepsilon>0$.

As a consequence, $\nexists(x,y)\in \mathcal{G}$ such that
$f\left(x\right)\leq v,\forall v\in V^{k}$, hence \textit{MP($v$)}
attains no solution for any $v\in V^{k}$ and so $\forall v\in V^{k}$
is removed from $V^{k}$.

ii) We assume some $v^{k}\in V^{k}$ satisfying $v^{k}>f\left(\bar{x}\right)$
and consider $p$ points
\[
v^{k,i}=v^{k}-\left(v_{i}^{k}-w_{v^{k},i}\right)e^{i},i=1,2,\dots,p.
\]
We will prove that 
$\exists i:v_{i}^{k,i}\geq f_{i}\left(\bar{x}\right),$
if not, we assume
$v_{i}^{k,i}<f_{i}\left(\bar{x}\right),\forall i.$ Besides, we also have
$w_{v^{k},i}\leq v_{i}^{k,i},\forall i,$
therefore
$w_{v^{k},i}<f_{i}\left(\bar{x}\right),\forall i.$

We can conclude that $w_{v^{k}}<f\left(\bar{x}\right)$. However, this results in a contradiction since both $w_{v^{k}}$ and $f\left(\bar{x}\right)$ belong to ${\rm WMin}\mathcal{Z}^{\diamond}$. Therefore, the assumption is false and we have
\[
\exists i:v_{i}^{k,i}\geq f_{i}\left(\bar{x}\right).
\]

Also, since $v^{k}>f\left(\bar{x}\right)$ and $v_{j}^{k}=v_{j}^{k,i},\forall j\neq i$,
we have
\[
\exists i:v^{k,i}\geq f\left(\bar{x}\right).
\]

Given that $v^{k,i}$ is an element of $V^{k+1}$, we can use mathematical induction to derive a sequence $\left\{ u^{k}\right\} _{k=0}^{\infty}$ where $u^{k}$ is an element of $V^{k}$ and $u^{0}$ is equal to $z$. This sequence satisfies the condition that $u^{k}>f\left(\bar{x}\right)$ because $u^{0}=z>f\left(\bar{x}\right)$.

Now we will prove that, when $k$ tends to infinity
\[
\lim_{k\rightarrow\infty}\left\Vert w_{u^{k}}-f\left(\bar{x}\right)\right\Vert =0.
\]

Since $u^{k+1}=u^{k}-\left(u_{i}^{k}-w_{u^{k},i}\right)e^{i}$ with
some $i$, we have $u^{k+1}<u^{k}$, thus sequence $\left\{ u^{k}\right\} _{k=0}^{\infty}$
is a decreasing one and therefore it converges because of having lower
bound $f\left(\bar{x}\right)$.

If $\left\{ u^{k}\right\} _{k=0}^{\infty}$ converges at a point $u\in{\rm Min}\mathcal{Z}\cap\left(z-\mathbb{R}_{+}^{p}\right)$
such that $u>f\left(\bar{x}\right)$, we have a contradiction since
$u,f\left(\bar{x}\right)\in{\rm Min}\mathcal{Z}\cap\left(z-\mathbb{R}_{+}^{p}\right)$,
hence
\[
\lim_{k\rightarrow\infty}\left\Vert u^{k}-f\left(\bar{x}\right)\right\Vert =0.
\]

Combining with Lemma \ref{lem-max_norm}, for any $\varepsilon>0$,
we can find a $u^{t}\in\left\{ u^{k}\right\} _{k=0}^{\infty}$ such
that $\left\Vert w_{u^{t}}-u^{t}\right\Vert <\dfrac{\varepsilon}{2}$
and $\left\Vert u^{t}-f\left(\bar{x}\right)\right\Vert <\dfrac{\varepsilon}{2}$. 

We have triangle inequality
\[
\left\Vert w_{u^{t}}-f\left(\bar{x}\right)\right\Vert \leq\left\Vert w_{u^{t}}-u^{t}\right\Vert +\left\Vert u^{t}-f\left(\bar{x}\right)\right\Vert <\varepsilon.
\]

And therefore we must have
\[
\lim_{k\rightarrow\infty}\left\Vert w_{u^{k}}-f\left(\bar{x}\right)\right\Vert =0.
\]

\end{proof}

\begin{lemma}\label{lem-converge}At the $k^{th}$ iteration, let
$w_{v^{k}}$ be the weakly nondominated point of $\mathcal{Z}^{+}$
induced by the solution $\left(x^{k},t_{k}\right)$ of Problem $(P^{2}(v^{k}))$,
we consider the situation when $\nexists i:w_{v^{k},i}=m_{i}$, when
k tends to infinity
\[
\lim_{k\rightarrow\infty}\left\Vert \varphi(f(x^{k}))-\varphi(v^{k})\right\Vert =0.
\]

\end{lemma}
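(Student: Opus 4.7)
The plan is to combine the vanishing gap $\|w_{v^k}-v^k\|\to 0$ supplied by Lemma \ref{lem-max_norm} with the monotonicity and continuity of the marginal function $\varphi$. First, Lemma \ref{lem-max_norm} directly gives $\|w_{v^k}-v^k\|\to 0$, and since $w_{v^k}=v^k+t_k\hat d$ with $\hat d$ a fixed positive vector, this is equivalent to $t_k\to 0$.

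Next I would extract a componentwise comparison between $f(x^k)$ and $v^k$. Because $v^k$ survives the inner loop of the algorithm, $\textit{MP}(v^k)$ is feasible, which forces $v^k\in\mathcal{Z}^+$ and hence $t_k\le 0$. From the optimality of $x^k$ in $(P^2(v^k))$, $\max_j\bigl(f_j(x^k)-v^k_j\bigr)/\hat d_j=t_k$, so $f_j(x^k)\le v^k_j+t_k\hat d_j=w_{v^k,j}$ for every $j$, and combining with $t_k\le 0$ yields $f(x^k)\le w_{v^k}\le v^k$ componentwise. Since $\varphi$ is decreasing on the conormal set $\mathcal{Z}^\diamond$, this already produces the one-sided bound $\varphi(f(x^k))\ge\varphi(v^k)$, so the whole task reduces to controlling $\varphi(f(x^k))-\varphi(v^k)$ from above.

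For the upper bound I would rely on continuity of $\varphi$ on the interior of $\mathcal{Z}^\diamond$. This continuity follows from the continuity of $h$, the compactness of $\mathcal{G}$, and the continuous (Hausdorff) variation of the constraint set $\{(x,y)\in\mathcal{G}:f(x)\le z\}$ with respect to $z$, via a standard Berge-type maximum theorem. The hypothesis $\nexists i:w_{v^k,i}=m_i$ keeps the iterates strictly above $m$, so limits stay away from degenerate boundary points where $\varphi$ could fail to be continuous. Extracting a subsequence along which $v^{k_j}\to z^*$, $x^{k_j}\to\bar x$, $w_{v^{k_j}}\to z^*$, and $f(x^{k_j})\to f(\bar x)\le z^*$, continuity yields $\varphi(v^{k_j})\to\varphi(z^*)$ and $\varphi(f(x^{k_j}))\to\varphi(f(\bar x))$. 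The coordinate $j^\ast$ attaining the maximum in $(P^2(v^{k_j}))$ satisfies $f_{j^\ast}(x^{k_j})=w_{v^{k_j},j^\ast}$, so in the limit $f_{j^\ast}(\bar x)=z^*_{j^\ast}$, and the remaining coordinates of $z^*$, being slack at an optimizer of $\varphi(z^*)$, do not influence its value; thus $\varphi(f(\bar x))=\varphi(z^*)$. A standard subsequence-of-subsequence argument then promotes this to $\|\varphi(f(x^k))-\varphi(v^k)\|\to 0$ for the entire sequence.

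The main obstacle is the very last point: rigorously justifying that $\varphi(f(\bar x))=\varphi(z^*)$ when $f(\bar x)$ may be strictly smaller than $z^*$ in non-binding coordinates. This requires a careful argument based on the marginal structure of $\varphi$ (slack coordinates of $z^*$ not influencing the optimal value), and it is precisely here that the hypothesis $w_{v^k}>m$ is needed, to rule out limiting situations in which a coordinate collapses to $m_i$ and $\varphi$ could jump.
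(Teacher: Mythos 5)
Your opening steps match the paper's: you invoke Lemma \ref{lem-max_norm} to get $\left\Vert w_{v^{k}}-v^{k}\right\Vert \to 0$ and you appeal to continuity of $\varphi$. But there is a genuine gap at exactly the point you flag as the ``main obstacle,'' and it is not a technicality that careful analysis of the marginal structure will repair. From the optimality of $(x^k,t_k)$ you only obtain $f(x^k)\le w_{v^k}$ componentwise (with equality in the maximizing coordinate), and you then need $\varphi(f(\bar x))=\varphi(z^*)$ even though $f(\bar x)$ may be strictly below $z^*$ in the slack coordinates. That claim is false in general: enlarging $z$ in a slack coordinate genuinely enlarges the feasible set $\{(x,y)\in\mathcal{G}: f(x)\le z\}$ of $MP(z)$, and the minimum of $h$ over the larger set can be strictly smaller. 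Nothing in the Berge-type continuity argument rules this out, and your observation that the slack coordinates are ``not binding at an optimizer of $\varphi(z^*)$'' does not imply they could not become binding for a different, better feasible point admitted by the relaxation.

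The missing idea is the one use the paper makes of the hypothesis $\nexists i: w_{v^k,i}=m_i$: it forces $w_{v^k}$ to be a nondominated (not merely weakly nondominated) point of $\mathcal{Z}^{+}$, i.e. $w_{v^k}\in{\rm Min}\mathcal{Z}$. Since $f(x^k)\in\mathcal{Z}$ and $f(x^k)\le w_{v^k}$, minimality of $w_{v^k}$ then forces the exact identity $f(x^k)=w_{v^k}$. With that identity there is no gap to control at all: $\varphi(f(x^k))=\varphi(w_{v^k})$, and the conclusion follows immediately from $\left\Vert w_{v^{k}}-v^{k}\right\Vert\to 0$ together with continuity of $\varphi$ (which the paper asserts from continuity of $h,f,g$ and compactness of the feasible set, without needing the subsequence machinery). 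You instead use the hypothesis only to ``keep iterates away from degenerate boundary points,'' which is not where its force lies; as a result your argument stalls where the paper's closes cleanly.
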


\begin{proof} Given that $f$, $g$, and $h$ are continuous functions with finite values and $X$ is a nonempty compact convex set, it follows that $\varphi$ is also a continuous function with finite values.

Besides, as a result of Lemma \ref{lem-max_norm}
\[
\lim_{k\rightarrow\infty}\left\Vert w_{v^{k}}-v^{k}\right\Vert \le\lim_{k\rightarrow\infty}\max_{v\in V^{k}}\left\Vert w_{v}-v\right\Vert =0.
\]

Since $\nexists i:w_{v^{k},i}=m_{i}$, thus $w_{v^{k}}\in{\rm Min}\mathcal{Z}$
and we have $w_{v^{k}}=v^{k}+t_{k}\hat{d}=f\left(x^{k}\right)$.

Because $\lim_{k\rightarrow\infty}\left\Vert w_{v^{k}}-v^{k}\right\Vert =0$
and $\varphi$ is a continuous function, we have
\[
\lim_{k\rightarrow\infty}\left\Vert \varphi(w_{v^{k}})-\varphi(v^{k})\right\Vert =0.
\]

Therefore
\[
\lim_{k\rightarrow\infty}\left\Vert \varphi(f(x^{k}))-\varphi(v^{k})\right\Vert =\lim_{k\rightarrow\infty}\left\Vert \varphi(w_{v^{k}})-\varphi(v^{k})\right\Vert =0.
\]
\end{proof}

\begin{theorem} If problem \ref{prob_OP} has a solution, then for any given $\varepsilon>0$, the algorithm will terminate after a finite number of iterations and return an $\varepsilon-$optimal solution to Problem \ref{prob_OP}.
\end{theorem}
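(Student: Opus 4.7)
The plan is a squeeze argument on the upper bound $\alpha_k$ and the lower bound $\beta_k$ maintained by the algorithm, combined with the structural lemmas just established. First I would verify that $\beta_k$ is genuinely a lower bound and $\alpha_k$ an upper bound for the optimal value $v^\star$ of \ref{prob_OP}: because $\mathcal{P}^k = \mathcal{L}(V^k) \subseteq \mathcal{Z}^{\diamond}$ and $\varphi$ is decreasing on the conormal set $\mathcal{Z}^{\diamond}$, every $z \in \mathrm{WMin}\,\mathcal{Z}^{\diamond}$ is dominated componentwise by some $v \in V^k$, hence $\varphi(z) \geq \varphi(v) \geq \beta_k$; conversely $\alpha_k = h(x^*, y^*)$ for a feasible pair of \ref{prob_SBP}, so $\alpha_k \geq v^\star$. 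Thus $\beta_k \leq v^\star \leq \alpha_k$ at every iteration.

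Next I would split according to whether some optimal solution $(\bar x, \bar y)$ of \ref{prob_SBP} satisfies $f_i(\bar x) = m_i$ for an index $i$. In that boundary case, Lemma \ref{lem-sol_at_border} guarantees that the initialization loop over $z^i = M - (M_i - m_i)e^i$ already discovers $(\bar x, \bar y)$ with $h(\bar x, \bar y) = v^\star$, so $\alpha_0 = v^\star$ and termination follows as soon as $\beta_k$ becomes sufficiently close. In the interior case, I would apply Lemma \ref{lem-sol_on_MinZ}(ii) inductively: starting from $M \in V^0$, there exists a descendant chain $\{u^k\}_{k\ge 0}$ with $u^k \in V^k$, $u^k \geq f(\bar x)$, and $w_{u^k} \to f(\bar x)$. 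Since the algorithm picks $v^k \in \arg\min_{v \in V^k} \varphi(v)$, one has $\beta_k = \varphi(v^k) \leq \varphi(u^k)$; continuity of $\varphi$ together with $u^k \to f(\bar x)$ yields $\limsup_k \beta_k \leq \varphi(f(\bar x)) = v^\star$, which combined with $\beta_k \leq v^\star$ forces $\lim_k \beta_k = v^\star$.

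For the upper bound I would invoke Lemma \ref{lem-converge} along the iterations on which $w_{v^k}$ stays strictly above the lower corner $m$: it produces $|\varphi(f(x^k)) - \varphi(v^k)| \to 0$, and since $\alpha_k$ is updated to $h(x^k, y^k) = \varphi(f(x^k))$ whenever this quantity decreases, we obtain $\alpha_k \to v^\star$. Combining the two limits gives $\alpha_k - \beta_k \to 0$, so for any prescribed $\varepsilon > 0$ there is a finite $K$ at which $\alpha_K - \beta_K \leq \varepsilon(1 + |\beta_K|)$; the algorithm then terminates and returns $(x^*, y^*)$ with $z^* = f(x^*) \in \mathrm{WMin}\,\mathcal{Z}$. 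Since $\beta_K \leq \varphi(z^*) \leq \alpha_K$ and $\beta_K, \varphi(z^*) \to v^\star$, the $\varepsilon$-optimality inequality $\alpha_K - \varphi(z^*) < \varepsilon(1 + |\varphi(z^*)|)$ holds for all sufficiently large $K$, as required.

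The main obstacle I anticipate is the bookkeeping in the interior case: Lemma \ref{lem-sol_on_MinZ} is phrased for a specific recursive cutting chain emanating from a fixed starting $z$, whereas the algorithm freely selects $v^k$ as the argmin of $\varphi$ over $V^k$, so the asserted sequence $\{u^k\}$ need not coincide with the algorithm's $\{v^k\}$. I would resolve this by observing that Procedure \ref{algo:next_time_step} only removes the vertex currently being cut, so any descendant $u^k$ of the chain is never deleted during a cut at some other vertex $v^k \neq u^k$; hence $u^k$ remains available to furnish the inequality $\beta_k \leq \varphi(u^k)$, which is already enough to drive $\beta_k \to v^\star$ without forcing the algorithm to follow the chain explicitly. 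A secondary subtlety is handling the iterations where $w_{v^k,i} = m_i$ for some $i$ (Lemma \ref{lem-converge} excludes them); these vertices are pruned by the algorithm's feasibility test on $V^k$, so they do not obstruct the limit argument along the remaining, dominant subsequence.
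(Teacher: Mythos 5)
Your proposal follows essentially the same route as the paper: the same case split via Lemma \ref{lem-sol_at_border} (boundary case $f_i(\bar x)=m_i$) versus Lemma \ref{lem-sol_on_MinZ}(ii) (interior case), the same descendant chain $\{u^k\}$, and the same squeeze of $\alpha_k-\beta_k$ through Lemmas \ref{lem-max_norm} and \ref{lem-converge} plus continuity of $\varphi$. Your observation that the algorithm's argmin vertex $v^k$ need not coincide with the chain vertex $u^k$, and that the chain survives cuts performed at other vertices so that $\beta_k\le\varphi(u^k)$ remains available, is actually more careful than the paper, which simply writes ``in case we chose $v^k=u^k$.''

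There is, however, one step that is a non sequitur as written: from $\beta_k\le\varphi(u^k)\le v^\star$ and $\varphi(u^k)\to v^\star$ you conclude $\lim_k\beta_k=v^\star$, but both facts only bound $\beta_k$ from \emph{above}, and two upper bounds cannot force convergence. (Relatedly, the inequality $\varphi(z)\ge\varphi(v)$ for a vertex $v\le z$ in your opening paragraph is reversed for a decreasing $\varphi$.) What is missing is the lower estimate $\liminf_k\beta_k\ge v^\star$, and it is recoverable from the lemmas you already invoke: when no coordinate of $w_{v^k}=v^k+t_k\hat d$ hits $m_i$ (the other vertices are pruned, as you note), $w_{v^k}$ is a weakly nondominated point of $\mathcal{Z}^{\diamond}$, hence $\varphi(w_{v^k})\ge v^\star$, while Lemmas \ref{lem-max_norm} and \ref{lem-converge} give $\varphi(w_{v^k})-\varphi(v^k)\to0$, so $\beta_k=\varphi(v^k)\ge v^\star-o(1)$. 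With that repair your squeeze closes; alternatively one can bypass the two separate limits, as the paper does, by bounding the gap directly via the triangle inequality $\alpha_k-\beta_k\le\Vert\varphi(w_{u^k})-\varphi(f(\bar x))\Vert+\Vert\varphi(f(\bar x))-\varphi(u^k)\Vert<\varepsilon$. (Both you and the paper also tacitly identify $h(x^k,y^k)$ with $\varphi(f(x^k))$, which requires $y^k$ to be chosen optimally rather than merely feasibly; that is a shared, separate looseness, not a defect specific to your argument.)
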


\begin{proof}According to Lemma \ref{lem-sol_at_border}, if the
global solution $(\bar{x},\bar{y})$ of Proposition \ref{prop-QWP_Y} satisfies $\exists i:f_{i}\left(\bar{x}\right)=m_{i}$
which means$f\left(\bar{x}\right)\in{\rm WMin}\mathcal{Z}\setminus{\rm Min}\mathcal{Z}$,
we will have $\bar{x}$ obtained by the proposed algorithm.

On the other hand, when $f\left(\bar{x}\right)$ is on ${\rm Min}\mathcal{Z}$,
since $V^{k}$ is the set of vertices of inner approximate outcome set, we must
have some $v^{k}\in V^{k}$ such that $f\left(\bar{x}\right)\in{\rm Min}\mathcal{Z}\cap\left(v^{k}-\mathbb{R}_{+}^{p}\right)$.
Applying Lemma \ref{lem-sol_on_MinZ}.ii), it exists a sequence pair
$\left(u^{t},w_{u^{t}}\right)$ such that
\[
\lim_{k\rightarrow\infty}\left\Vert w_{u^{k}}-f\left(\bar{x}\right)\right\Vert =0,
\]
thus
\[
\lim_{k\rightarrow\infty}\left\Vert \varphi\left(w_{u^{k}}\right)-\varphi\left(f\left(\bar{x}\right)\right)\right\Vert =0,
\]
so we can find $k>0$ such that
\[
\left\Vert \varphi\left(w_{u^{k}}\right)-\varphi\left(f\left(\bar{x}\right)\right)\right\Vert <\frac{\varepsilon}{2}.
\]

Also, thanks to Lemma \ref{lem-converge}, we have
\[
\lim_{k\rightarrow\infty}\left\Vert \varphi(w_{u^{k}})-\varphi(u^{k})\right\Vert =0.
\]

According to the construction of the bound $\alpha_{k}$ and $\beta_{k}$,
we have
\[
0\le\alpha_{k}-\beta_{k}=h(x^{k},y^{k})-\varphi(v^{k})=\varphi(f(x^{k}))-\varphi(v^{k}).
\]

In case we chose $v^{k}=u^{k}$, such that
\[
\left\Vert \varphi\left(f\left(\bar{x}\right)\right)-\varphi(u^{k})\right\Vert <\frac{\varepsilon}{2},
\]
we will obtain
\begin{eqnarray*}
0 & \leq & \alpha_{k}-\beta_{k}\\
 & = & \varphi(f(x^{k}))-\varphi(f(u^{k}))\\
 & = & \varphi(w_{u^{k}})-\varphi(f(u^{k}))\\
 & \leq & \Big\| \varphi\left(w_{u^{k}}\right)-\varphi\left(f\left(\bar{x}\right)\right)\Big\| +\Big\| \varphi\left(f\left(\bar{x}\right)\right)-\varphi(u^{k})\Big\| \\
 & < & \varepsilon.
\end{eqnarray*}

Furthermore, Lemma \ref{lem-sol_on_MinZ}.i) proves that if we select
a vertex $v^{k}\in V^{k}$ which causes \textit{MP($v^k$)} to have no feasible solution in ${\rm Min}\mathcal{Z}\cap\left(v^{k}-\mathbb{R}_{+}^{p}\right)$,
the vertex set $V^{k}$ will become empty in a finite number of iterations
and we can continue for other $v^{k}\in V^{k}$. Moreover, if we choose
a vertex $v^{k}\in V^{k}$ such that 
\[
\nexists (x,y)\in\mathcal{G}:f\left(x\right)\in\left(v^{k}-\mathbb{R}_{+}^{p}\right),
\]
problem \textit{MP($v^k$)} will have no solution resulting in the elimination of $v^{k}$ from $V^{k}$ and the algorithm goes on for other
another vertex.

As a result, we can conclude that the algorithm terminates in a finite number of iterations and $x^{*}$ is an $\varepsilon-$optimal solution to Problem
\ref{prob_OP}.

\end{proof}

\section{Computational experiments}

We demonstrate the efficiency of our algorithm through numerical results
in this section. The experiments were performed on 2.6GHz Intel Core
i7 (four logical cores), 8Gb of RAM. The code is implemented on Matlab
R2018a.

\begin{example}\label{ex_benson2011}Consider problem \ref{prob_SBP}
where functions are defined by
\[
\begin{array}{ccl}
h(x) & = & x_{1}+x_{2}^{2}\\
f_{1}(x) & = & x_{1}^{2}+x_{2}^{2}+0.4x_{1}-4x_{2}\\
f_{2}(x) & = & \max\{-0.5x_{1}-0.25x_{2}-0.2,-2x_{1}+4.6x_{2}-5.8\}
\end{array}
\]

and
\[
X=\{x\in\mathbb{R}^{2}\mid Ax\le b,x\ge0,c(x)\le0\},
\]

where
\[
A=\left[\begin{array}{cc}
1.0 & -2.0\\
-1.0 & 1.0\\
2.0 & 1.0\\
2.0 & 5.0\\
-1.0 & -1.0
\end{array}\right],b=\left[\begin{array}{c}
1.0\\
1.0\\
4.0\\
10.0\\
-1.5
\end{array}\right],
\]

and
\[
c(x)=0.5(x_{1}-1)^{2}+1.4(x_{2}-0.5)^{2}-1.1.
\]

\end{example}
\begin{table}

\caption{\label{tab:ex_benson2011} The computational result of Example \ref{ex_benson2011}}
\centering
\begin{tabular}{ccccc}
\hline 
$k$ & $v^{k}$ & $\alpha^{k}$ & $\beta^{k}$ & Gap\tabularnewline
\hline 
\hline 
1 & (0.432442, 1.067557) & 1.572121 & 1.250000 & 0.322121\tabularnewline
\hline 
2 & (0.573591, 0.926408) & 1.431824 & 1.250000 & 0.181824\tabularnewline
\hline 
3 & (0.688942, 0.811057) & 1.346757 & 1.250000 & 0.096757\tabularnewline
\hline
& &\dots & &  \tabularnewline
\hline
31 & (0.996542, 0.503457) & 1.250012 & 1.250000 & 0.000012\tabularnewline
\hline 
32 & (0.997561, 0.502438) & 1.250006 & 1.250000 & 0.000006 \tabularnewline
\hline 
\end{tabular}
\end{table}
Observe that $h(x),\ f_1(x)$ are convex functions and hence pseudoconvex, $f_2(x)$ is the maximum of two affine functions so that it is also a pseudoconvex function. The restraint functions are linear so that they all satisfy assumption \textbf{(A4)}. At the initialization step, the box $[m,M]=[-2.52,0.49,3.29,1.20]$,
the initial upper bound $\alpha_{0}=\infty$, and the initial lower
bound $\beta=\varphi(M)=1.25$ where $\varphi$ is the constructed
function. It is natural to choose $\hat{d}=(1,1)^{T}$ and $\varepsilon=0.00001$.
The algorithm terminates in 1.2682 seconds after 32 iterations and
returns the optimal solution $x^{*}=(0.997561, 0.502439)$ with objective
value $h(x^{*})=1.250006$. This result is better than those reported
in \cite{Benson2012}, which are $x=(0.2585,1.2415)$ and $h(x)=1.7989$. The computation details are given in Table \ref{tab:ex_benson2011}, and more close to the exact optimal results which are $x^* = (1.0, 0.5)$ with
$h(x^*) = 1.25$.

\begin{example}Consider the following fractional programming
problem where $h(x)$ and $f_2(x)$ is pseudoconvex due to Theorem \ref{pseudoconvex1} and \ref{pseudoconvex2}, respectively. \label{ex:benson2005}

\noindent 
\begin{align*}
    {\rm min}\  h(x) = \dfrac{2x_1 + 3x_2}{4x_1 + 5x_2 + 10}\\
\end{align*}

where
\begin{align*}
    f_1(x) = \dfrac{(3x_1 +x_2)^2}{(3x_1 + x_2 - 1)^3}\\
    f_2(x) = \dfrac{x_{1}^{2}-2x_{1}+x_{2}^{2}-8x_{2}}{x_{2}+1}
\end{align*}
and the lower-level problem is $\min (f_1(x), f_2(x))$ with the feasible set $$X = \{x_{1},x_{2}\in\mathbb{R} \vert 2x_{1}+x_{2} \leq 6, 3x_{1}+x_{2}  \leq  8, x_{1}-x_{2}  \leq 1, x_{1},x_{2}  \geq 1\}$$
\end{example}
At the initialization step, the box $[m, M]$ = $[0.19,-4.34,0.42,-3.67]$, the initial upper bound $\alpha_{0}= \infty$, and the initial lower bound $\beta=\varphi(m)=0.276170$ where $\varphi$ is the constructed function. We choose $\hat{d}=(1,1)^{T}$ and $\varepsilon=0.01$. The algorithm terminates in 0.8376 seconds after 6 iterations and returns the optimal solution $x^* = (1.020247,1.835256)$ with objective
value $h(x^*) = 0.302334$. The computation details are given in Table \ref{tab:ex_benson2005}.

\begin{table}[h]
\centering{}\caption{\label{tab:ex_benson2005} The computational result of Example \ref{ex:benson2005}}
\begin{centering}
\begin{tabular}{ccccc}
\hline 
$k$ & $v^{k}$ & $\alpha^{k}$ & $\beta^{k}$ & Gap\tabularnewline
\hline 
\hline 
1 &  (2.095784, 1.712646) &  0.346225 & 0.276170 & 0.070055\tabularnewline
\hline 
2 & (1.781338, 2.206333) & 0.346225 & 0.285986 & 0.070055\tabularnewline
\hline 
3 & (1.454576, 2.020253) & 0.346067 & 0.285986 & 0.060081\tabularnewline
\hline 
4 & (1.161954, 1.887563) & 0.331592 & 0.31198 & 0.030346\tabularnewline
\hline 
5 & (1.300950, 1.946424) & 0.331592 & 0.315449 & 0.019604\tabularnewline
\hline 
6 & (1.020247, 1.835256) & 0.324469 & 0.315449 & 0.009020\tabularnewline
\hline 
\end{tabular}
\par\end{centering}
\end{table}

\begin{example} We consider the following problem where the lower problem has four objective functions
\noindent  \label{ex:ex3}
\begin{align*}
    \min &\ h(x) = \dfrac{3x_1 + 2x_2 + 10x_3 +11}{x_1 + x_2 +x_3 +10}\\
\end{align*}
where 

\[
\begin{array}{ccl}
    f_1(x) & = & \dfrac{2x_{1}+5x_{2}+3x_{3}+10}{3x_{2}+3x_{3}+10} \\
    f_2(x) & = & \dfrac{2x_{1}+4x_{2}+10}{4x_{1}+4x_{2}+5x_{3}+10} \\
    f_3(x) & = & \dfrac{x_{1}+2x_{2}+5x_{3}+10}{x_{1}+5x_{2}+5x_{3}+10}\\
    f_4(x) & = & \dfrac{x_{1}+2x_{2}+4x_{3}+10}{5x_{2}+4x_{3}+10}
\end{array}
\]
The feasible set for the lower-level problem is the set of positive vectors in $\mathbb{R}^3_+$ such that
\begin{align*}
    2x_{1}+x_{2}+5x_{3} \leq 10, \\
    x_{1}+6x_{2}+3x_{3} \leq 10,\\ 
    5x_{1}+9x_{2}+2x_{3} \leq 10,\\ 
    9x_{1}+7x_{2}+3x_{3} \leq 10
\end{align*}
\end{example}
All objective functions are pseudoconvex by Theorem \ref{pseudoconvex1}. At the initialization step, the box  $[m,M] =[1.00,0.50,1.65,0.79,1.17,1.00,3.00,1.00]$, $\hat{d}=(1, 1, 1, 1)^{T}$ and $\varepsilon=0.01$.
The initial upper bound $\alpha_{0}=\infty$, and the initial lower
bound $\beta=\varphi(M)=0.604$.
The algorithm terminates in 1.5260 seconds after 7 iterations and
returns the optimal solution $x^{*}=(0.000000, 0.775596, 0.007336)$ with objective
value $h(x^{*})=0.607448$. The computation details are given in Table \ref{tab:ex3}.

\begin{table}[h]
\centering{}\caption{\label{tab:ex3} The computational result of Example \ref{ex:ex3}}
\begin{centering}
\begin{tabular}{ccccc}
\hline 
$k$ & $v^{k}$ & $\alpha^{k}$ & $\beta^{k}$ & Gap\tabularnewline
\hline 
\hline 
1 & (0.000000, 0.483159, 0.697862) & 0.894430 & 0.604000 & 0.290430 \tabularnewline
\hline 
2 & (0.000000, 0.783986, 0.483034) & 0.818089 & 0.603704 & 0.214385 \tabularnewline
\hline 
3 & (0.000000, 0.877863, 0.084834) & 0.648966 & 0.603704 & 0.045262\tabularnewline
\hline 
4 & (0.000000, 0.822004, 0.292454) & 0.648966 & 0.603704 & 0.045262\tabularnewline
\hline 
5 & (0.000000, 0.825505, 0.127032) &0.648966 & 0.603704 & 0.045262\tabularnewline
\hline
6 & (0.000000, 0.823124, 0.223619) & 0.648966 & 0.603704 & 0.045262\tabularnewline
\hline 
7 & (0.000000, 0.775596, 0.007336) &  0.607448 & 0.603704 & 0.003745\tabularnewline
\hline 
\end{tabular}
\par\end{centering}
\end{table}

\begin{example} We consider an example in \cite{example4}\label{ex:ex4}with linear objective in the upper problem
    \[
\begin{array}{ccl}
h(x) & = & -x_{1} - 0.9\\
f_{1}(x) & = & x_1\\
f_{2}(x) & = & x_2\\
g(x) &=& x_1^2 + x_2^2 - 0.81
\end{array}
\]
and the feasible set for the lower-level problem is 
\[
X = \{x \in [-1, 1] \vert x_1 + x_2 + 1 \geq 0 \}
\]
\end{example}
At the initialization step, the box  $[m,M] =[-0.90,-0.90,0.00,-0.00]$, $\hat{d}=(1, 1)^{T}$ and $\varepsilon=0.01$.
The initial upper bound $\alpha_{0}=\infty$, and the initial lower
bound $\beta=\varphi(M)=-1.8$ where $\varphi$ is the constructed
function.
The algorithm terminates in 1.4638 seconds after 17 iterations and
returns the optimal solution $x^{*}=(-0.893699, -0.106301)$ with objective
value $h(x^{*})=-1.793699$ which is significantly close to the exact optimal value reported in \cite{example4}. The computation details are given in Table \ref{tab:ex4}.
\begin{table}[h]
\centering{}\caption{\label{tab:ex4} The computational result of Example \ref{ex:ex4}}
\begin{centering}
\begin{tabular}{ccccc}
\hline 
$k$ & $v^{k}$ & $\alpha^{k}$ & $\beta^{k}$ & Gap\tabularnewline
\hline 
\hline 
1 & (-0.526290, -0.473709) & -1.426291 & --1.800000 & 0.373709\tabularnewline
\hline 
2 & (-0.775599, -0.224400) & -1.675600 & -1.800000 & 0.124400\tabularnewline
\hline 
3 & (-0.893699, -0.106300) & -1.793699 & -1.800000 & 0.006301\tabularnewline
\hline 
\end{tabular}
\par\end{centering}
\end{table}

\begin{example} We consider another example in \cite{example4}
\label{ex:ex5}
\[
\begin{array}{ccl}
    h_1(x) & = & \left(x_1-1\right)^2+\displaystyle\sum_{i=2}^{14} x_i^2+0.25\\
     f_{1}(x) & = &\displaystyle\sum_{i=1}^{14} x_i^2\\
    f_{2}(x) & = & \left(x_1-0.5\right)^2+\sum_{i=2}^{14} x_i^2\\
\end{array}
\]

\[
    X =\{ x \in \mathbb{R}^{14} \vert -1 \leq\left(x_1, x_2, \ldots, x_{14}\right) \leq 2 \}
\]
\end{example}
At the initialization step, the box  $[m,M] =[0.00,0.00,0.25,0.25]$, $\hat{d}=(1, 1)^{T}$ and $\varepsilon=0.01$.
The initial upper bound $\alpha_{0}=\infty$, and the initial lower
bound $\beta=\varphi(M)=0.5$ where $\varphi$ is the constructed
function.
The algorithm terminates in 1.4638 seconds after 17 iterations and
returns the optimal solution $x^{*}=(0.5, 0,0,0,0,0,0,0,0,0,0,0,0,0)$ with objective
value $h(x^{*})=0.5$ which is also the exact optimal value reported in \cite{example4}.  The computation details are given in Table \ref{tab:ex5}.
\begin{table}[h]
\centering{}\caption{\label{tab:ex5} The computational result of Example \ref{ex:ex5}}
\begin{centering}
\begin{tabular}{cccc}
\hline 
$k$  & $\alpha^{k}$ & $\beta^{k}$ & Gap\tabularnewline
\hline 
\hline 
1  & 0.812500 & 0.500000 & 0.312500\tabularnewline
\hline 
2  & 0.566406 & 0.500000 & 0.066406\tabularnewline
\hline 
3  & 0.503922 &0.500000 & 0.003922\tabularnewline
\hline 
4  & 0.500015 & 0.500000 & 0.000015\tabularnewline
\hline 
5  & 0.500000 &0.500000 & 0.000000\tabularnewline
\hline 
\end{tabular}
\par\end{centering}
\end{table}

\begin{example} We now consider the following pseudoconvex optimization problem  
\noindent  \label{ex:ex6}
\begin{align*}
    \min \ h(x,y) = \dfrac{x_1^2 + 2x_2^2 +10y_1^2 + y_2^2 +11}{x_1 + x_3 + y_1 + 20} 
\end{align*} 

\[
\begin{array}{ccl}
    f_1(x) & = & \dfrac{2x_{1}+5x_{2}+3x_{3}+10}{3x_{2}+3x_{3}+10} \\
    f_2(x) & = & \dfrac{2x_{1}+4x_{2}+10}{4x_{1}+4x_{2}+5x_{3}+10} \\
    f_3(x) & = & \dfrac{x_{1}+2x_{2}+5x_{3}+10}{x_{1}+5x_{2}+5x_{3}+10}\\
    f_4(x) & = & \dfrac{x_{1}+2x_{2}+4x_{3}+10}{5x_{2}+4x_{3}+10}
\end{array}
\]
\begin{align*}
    X = &\{ x \in \mathbb{R}^3_+ \vert 2x_{1}+x_{2}+5x_{3} \leq 10,\\
     &x_{1}+6x_{2}+3x_{3} \leq 10, 5x_{1}+9x_{2}+2x_{3} \leq 10, 9x_{1}+7x_{2}+3x_{3} \leq 10\} 
\end{align*}
\[
\begin{array}{ccl}
    g_1(x) &= -x_2 -x_3-2y_1 -y_2 + 2   \\
     g_2(x)&= x_2 + x_3 -5y_1 +2y_2 - 1 \\
     y \in \mathbb{R}^2_+
\end{array}
\]
\end{example}
At the initialization step, the box  $[m,M] =[1.14,0.68,0.99,0.98,1.27,0.92,1.14,1.17]$, $\hat{d}=(1, 1, 1, 1)^{T}$ and $\varepsilon=0.01$.
The initial bounds $\alpha_{0}=\infty$, $\beta=\varphi(M)=0.009170$.
The algorithm terminates after 5 iterations and
returns the optimal solutions $x^{*}= (0.130662, 0.156198, 1.558087)$ and $y^*$ = $(0.142857, 0.000000)$ with objective
value $h(x^{*}, y^*)= 0.012365$. The computation details are given in Table \ref{tab:ex6}.
\begin{table}[h]
\centering{}\caption{\label{tab:ex6} The computational result of Example \ref{ex:ex6}}
\begin{centering}
\begin{tabular}{ccccc}
\hline 
$k$ & $v^{k}$ & $\alpha^{k}$ & $\beta^{k}$ & Gap\tabularnewline
\hline 
\hline 
1 & ( 0.000000, 0.422286, 1.291999,  0.142857, 0) & 0.026160 & 0.009170 & 0.190285 \tabularnewline
\hline 
2 & ( 0.001831, 0.421485, 1.292799, 0.142857, 0) & 0.026094 & 0.009170 & 0.016923 \tabularnewline
\hline 
3 & (0.329682, 0.276641, 1.437644, 0.142857, 0) & 0.021261 & 0.009170 & 0.012091 \tabularnewline
\hline 
4 & (0.464042, 0.170190, 1.544095, 0.142857, 0) & 0.021261 & 0.009170 & 0.012091 \tabularnewline
\hline 
5 & (0.130662, 0.156198, 1.558087, 0.142857, 0) & 0.012365 & 0.009170 & 0.003195 \tabularnewline
\hline
\end{tabular}
\par\end{centering}
\end{table}

\section{Conclusion}
This study presents an exact algorithm for solving a lass of pseudoconvex semivectorial bilevel optimization problem where the upper objective function is scalar pseudoconvex and ower one is a pseudoconvex vector function. The algorithm’s efficiency stems from its use of the outcome space cutting cone through a monotonic optimization approach and the neural dynamic approach to solve subproblems, which is computationally efficient and does not necessitate particular characteristics of the objective functions. The algorithm’s convergence has been demonstrated and its effectiveness is supported by computational experiments.
\section*{Acknowledgement}

This work was supported by Vietnam Ministry of Education and Training.

\section*{Disclosure statement}
No potential conflict of interest was reported by the authors







\bibliographystyle{tfs}
\bibliography{ref}

\end{document}